\documentclass[11pt]{amsart}
\usepackage{amssymb}
\usepackage{graphicx,tikz}
\usetikzlibrary{arrows}
\pagestyle{plain}
\date{}

\graphicspath{{.}{./eps/}}

\textwidth = 6.5 in \textheight = 9 in \oddsidemargin = 0.0 in \evensidemargin = 0.0 in \topmargin = 0.0 in \headheight =
0.0 in \headsep = 0.0 in
\parskip = 0.1in
\parindent = 0.0in

\newtheorem{theorem}{Theorem}
\newtheorem{corollary}[theorem]{Corollary}

\newtheorem{definition}{Definition}

\newtheorem{proposition}[theorem]{Proposition}
\newtheorem{lemma}[theorem]{Lemma}

\newcommand\GG{\Gamma_d(q)}

\newcommand\dll{DL_d(q)}

\newcommand\Z{\mathbb Z}

\newcommand\N{\mathbb N}
\newcommand\mlg{\left((m_1(g),l_1(g)),(m_2(g),l_2(g)), \cdots ,(m_d(g),l_d(g))\right)}
\newcommand\mlq{\left((m_1, l_1), (m_2, l_2), \dots, (m_d, l_d)\right)}
\newcommand\eone{\textbf e_1}
\newcommand\etwo{\textbf e_2}

\newcommand\ed{\textbf e_d}
\newcommand\edd{\textbf e_{d-1}}

\newcommand\si{A_{\sigma(i)}}

\newcommand\ti{A_{\tau(i)}}

\title{Metric properties of Diestel-Leader groups}
\author[Melanie Stein] {Melanie Stein}
\address{Department of Mathematics, Trinity College, Hartford, CT 06106}
\email{melanie.stein@trincoll.edu}

\author[Jennifer Taback] {Jennifer Taback}
\address{Department of Mathematics, Bowdoin College, Brunswick, ME 04011}
\email{jtaback@bowdoin.edu}

\thanks{The second author acknowledges support from
National Science Foundation grants DMS-0604645 and DMS-1105407. Both authors acknowledge support from a Bowdoin College
Faculty Research Award. Many thanks to Sean Cleary, David Fisher, Martin Kassabov, Tim Riley, Peter Wong and Kevin Wortman
for helpful conversations during the writing of this paper.} 

\date{\today}
\begin{document}

\begin{abstract}
In this paper we investigate metric properties of the groups $\Gamma_d(q)$ whose Cayley graphs are the
Diestel-Leader graphs $DL_d(q)$ with respect to a given generating set $S_{d,q}$.  These groups provide a geometric
generalization of the family of lamplighter groups, whose Cayley graphs with respect to a certain generating set are the
Diestel-Leader graphs $DL_2(q)$.  Bartholdi, Neuhauser and Woess in \cite{BNW} show that for $d \geq 3$, $\Gamma_d(q)$ is of type $F_{d-1}$ but not $F_d$.  We show below that these groups have dead end elements of arbitrary depth with respect to the generating set $S_{d,q}$, as well as infinitely many cone types and hence no regular language of geodesics.  These results are proven using a combinatorial formula to compute the word length of group elements with respect to $S_{d,q}$ which is also proven in the paper and relies on the geometry of the Diestel-Leader graphs.
\end{abstract}

\maketitle

\section{Introduction}

We investigate the metric properties of a family of groups whose Cayley graphs with respect to a carefully chosen generating
set are the Diestel-Leader graphs $\dll$. These graphs are subsets of a product of $d$ infinite trees of valence $q+1$.  We call these
groups {\em Diestel-Leader groups} and denote them $\GG$.   More general Diestel-Leader graphs were introduced in \cite{DL}
as a potential answer to the question ``Is any connected, locally finite, vertex transitive graph quasi-isometric to the Cayley graph of a
finitely generated group?"  It was first shown in \cite{EFW} that $DL_2(m,n)$, the Diestel-Leader graph which is a subset of
a product of two trees of valence $m+1$ and $n+1$ respectively, is not quasi-isometric to the Cayley graph of any finitely generated group when $m \neq n$.  It is
proven in \cite{BNW} that Diestel-Leader graphs which are subsets of the product of any
number of trees of differing valence are not Cayley graphs of finitely generated groups.

It is well known that the Cayley graph of the wreath product $L_n=\Z_n \wr \Z$, often called the {\em lamplighter group},
with respect to the generating set $\{ t,ta, ta^2, \dots, ta^{n-1}\}$ (where $a$ is the generator of $\Z_n$ and $t$ generates $\Z$) is the
Diestel-Leader graph $DL_2(n)$. This graph is a subset of the product of two trees of constant valence $n+1$. The groups we
study below provide a geometric generalization of the family of lamplighter groups, as their Cayley graphs generalize the
geometry of the lamplighter groups, that is, their Cayley graphs with respect to a natural generating set $S_{d,q}$ are the
``larger" Diestel-Leader graphs $DL_d(q)$, which are subsets of the product of $d$ trees of constant valence $q+1$, and are
defined explicitly in Section \ref{sec:DLgraphs} below.

Bartholdi, Neuhauser and Woess in \cite{BNW} present a construction of a group which we denote $\GG$, a generating set
$S_{d,q}$ and an identification with the graph $DL_d(q)$ which they prove to be the Cayley graph $\Gamma(\GG,S_{d,q})$.
Moreover, they provide a simple metric criterion for when their construction holds, namely either $d=2$, $d=3$ or if $d \geq 4$ and
$q = p_1^{e_1}p_2^{e_2} \cdots p_r^{e_r}$ is the prime power decomposition of $q$, then $p_i > d-1$ for all $i$.  They show
that the groups $\GG$ are type $F_{d-1}$ but not $F_d$ when $d \geq 3$, hence not automatic.  We note that there are still
open cases where it is not known whether $DL_d(q)$ is the Cayley graph of a finitely generated group; the smallest open case
is $DL_4(2)$.

Random walks in the Cayley graph $\Gamma(\GG,S_{d,q})$ are studied in \cite{BNW}, and a presentation for the group is given
explicitly.  For example, when $d=3$ Bartholdi, Neuhauser and Woess obtain the presentation
$$\Gamma_3(m) \cong \langle a,s,t | a^m=1, \ [a,a^t] = 1, \ [s,t] = 1, \ a^s = aa^t \rangle.$$
When $m=p$ is prime, it is shown in \cite{CT} that $\Gamma_3(p)$ is a cocompact lattice in $Sol_5 \left({\mathbb F}_p((t))
\right)$, and that its Dehn function is quadratic.  The Dehn function of $\Gamma_3(m)$ is studied for any $m$  in \cite{KR}
where it is shown to be at most quartic.  It was mentioned to the authors by Kevin Wortman that arguments analogous to those of Gromov in \cite{G} imply that  the Dehn function
of $\GG$ is quadratic regardless of the values of $d \geq 3$ and $q$.  When the relation $a^m=1$ is removed from the
presentation above, one obtains Baumslag's metabelian group $\Gamma$ which, in contrast to $\Gamma_3(m)$, has exponential
Dehn function \cite{KR}.  Baumslag defined this group to provide the first example of a finitely presented group with an
abelian normal subgroup of infinite rank.

It is noted in \cite{BNW} that $\GG$ is in most cases an automata group, hence a self-similar group.  Metric properties of
self-similar groups are in general not well understood.  In this paper we seek to answer many of the standard geometric
group-theoretic questions related to metric properties of groups and their Cayley graphs for these Diestel-Leader groups
$\GG$.  Such properties often rely on the ability to compute word length of elements within the group; we begin by proving
that a particular combinatorial formula yields the word length of elements of $\GG$ with respect to the generating set
$S_{d,q}$.  This formula relies on the symmetry present in the Diestel-Leader graph, and we subsequently use it to prove
that $\GG$ has dead end elements of arbitrary depth with respect to $S_{d,q}$.  This generalizes a result of Cleary and
Riley \cite{CR1,CR2} which proves that $\Gamma_3(2)$ with respect to a generating set similar to $S_{3,2}$ has dead end
elements of arbitrary depth, the first example of a finitely presented group with this property.  The word length formula is
used in later sections to show that $\GG$ has infinitely many cone types, and hence no regular language of geodesics with
respect to $S_{d,q}$.

\section{Definitions and Background on Diestel-Leader graphs}
\label{sec:DLgraphs}

To define $DL_d(q)$, let $T$ be a homogeneous, locally finite, connected tree in which the degree of each vertex is $q+1$.
This tree has an orientation such that each vertex $v$ has a unique predecessor $v^-$ and $q$ successors $w_1,w_2, \cdots
,w_q$ such that $w_i^-=v$ for $1 \leq i \leq q$. The transitive closure of the set of relationships of the form $v^- < v$
induces the partial order $\preccurlyeq$. In this partial order, any two vertices $v, w \in T$ have a greatest common
ancestor $v\curlywedge w$. Choose a basepoint $o \in T$, and define a height function $h(v)= d(v, o \curlywedge v)-d(o,
o\curlywedge v)$, where $d(x,y)$ denotes the number of edges on the unique path in $T$ from $x$ to $y$. With this
definition, note that $h(v^-) = h(v) - 1$.

Let $T_1$, $T_2, \cdots ,T_d$ denote $d$ copies the tree $T$, with basepoints $o_i$ and height functions $h_i$ for $1 \leq i
\leq d$. The Diestel-Leader graph $DL_d(q)$ is the graph whose vertex set $V_d(q)$ is the set of $d$-tuples $(x_1, x_2,
\dots , x_d)$ where $x_i$ is a vertex of $T_i$ for each $i$, and $h_1(x_1)+ \cdots +h_d(x_d)=0$. Two vertices $x=(x_1,
\dots, x_d)$ and $y=(y_1, \dots, y_d)$ are connected by an edge if and only if there are two indices $i$ and $j$, with $i
\neq j$, such that $x_i$ and $y_i$ are connected by an edge in $T_i$, $x_j$ and $y_j$ are connected by an edge in $T_j$, and
$x_k=y_k$ for $k \neq i,j$.

There is a projection $\Pi: V_d(q) \rightarrow (\Z^2)^d$ given by $$\Pi(x)=\Pi(x_1, x_2, \dots, x_d)=\mlq$$ where
$m_i=d(o_i,o_i\curlywedge x_i)$ and $l_i=d(x_i,o_i\curlywedge x_i)$. In particular, $0 \leq m_i$ and $0 \leq l_i$ for all
$i$.  Note that in $T_i$, the shortest path from $o_i$ to $x_i$ has length $m_i+l_i$, and recall that $h_i(x_i)=l_i-m_i$.
The defining conditions of the Diestel-Leader graph ensure that $\sum_{i=1}^d l_i-m_i = 0$.

In \cite{BNW} it is shown that these graphs are Cayley graphs of certain matrix groups, when a simple metric condition is
satisfied. Specifically, let $\mathcal{L}_q$ be a commutative ring of order $q$ with multiplicative
unit 1, and suppose $\mathcal{L}_q$ contains distinct elements $l_1, \dots, l_{d-1}$ such that if $d\geq 3$, their pairwise differences are invertible.  Define a ring of polynomials in the formal variables $t$ and $(t+l_i)^{-1}$ for $1 \leq i \leq d-1$ with
finitely many nonzero coefficients lying in ${\mathcal L}_q$:
$${\mathcal R}_d({\mathcal L}_q) = {\mathcal L}_q[t,(t+l_1)^{-1},(t+l_2)^{-1}, \cdots ,(t+l_{d-1})^{-1}].$$

It is proven in \cite{BNW} that the group $\Gamma_d(\mathcal{L}_q)$ (which we denote by $\GG$) of affine matrices of the
form
$$\left( \begin{array}{cc} (t+l_1)^{k_1} \cdots (t+l_{d-1})^{k_{d-1}} & P \\ 0 & 1 \end{array} \right), \text{ with }
k_1,k_2, \cdots ,k_{d-1} \in \Z \text{ and }P \in {\mathcal R}_d({\mathcal L}_q)$$
has Cayley graph $\dll$ with respect to the generating set $S_{d,q}$ consisting of the matrices
$$\left( \begin{array}{cc} t+l_i & b \\ 0 & 1 \end{array} \right)^{\pm 1}, \text{ with } b \in {\mathcal L}_q, \ i \in
\{1,2, \cdots ,d-1\} \text{ and }$$
$$ \left( \begin{array}{cc} (t+l_i)(t+l_j)^{-1} & -b(t+l_j)^{-1} \\ 0 & 1 \end{array} \right), \text{ with } b \in {\mathcal
L}_q, \ i,j \in \{1,2, \cdots ,d-1\}, \ i \neq j.$$
This construction holds for any value of $q$ when $d=2$ or $d=3$, and when $d \geq 4$ and $q = p_1^{e_1}p_2^{e_2} \cdots p_r^{e_r}$
is the prime power decomposition of $q$, we require that $p_i > d-1$ for all $i$.  We refer the reader to \cite{BNW} for
more details on this construction and the identification between the group and the Cayley graph $DL_d(q)$.

In exploring the metric properties of the groups $\GG$, and hence the Cayley graphs $DL_d(q)$, one often needs to keep track
of {\em edge types} along a path in $DL_d(q)$ rather than the specific generators which label the edges along the path.
Given any vertex $x=(x_1, x_2, \dots, x_d)$, by an edge of type ${\textbf e_i}-{\textbf e_j}$ emanating from vertex $x$ we
mean an edge with one endpoint at $x$ and the other at $y=(y_1, y_2, \dots , y_d)$ where $y_k=x_k$ for $k \notin \{i,j\}$,
$y_i^-=x_i$, and $y_j=x_j^-$. Note that $h_i(y_i)=h_i(x_i)+1$ and $h_j(y_j)=h_j(x_j)-1$. There are exactly $q$ possible
choices for $y_i$, so there are $q$ distinct edges of type ${\textbf e_i}- {\textbf e_j}$ emanating from $x$.

Since the vertices of $DL_d(q)$ are identified with the elements of $\GG$, we abuse notation and consider the projection map
$\Pi$ to be a map from the group $\GG$ to $(\Z_2)^d$, and write
$$\Pi(g) =\Pi(x=(x_1, x_2, \dots, x_d)) =\mlg $$ when $g \in \GG$ is identified with the vertex $x$ in $\dll$. We remark that the basepoint vertex $o=(o_1, \ldots, l_d)$ in $DL_d(q)$ is identified with the identity element in $\GG$.

\section{Computing Word length in $\GG$ with respect to $S_{d,q}$}
\label{sec:wordlength}

Let $S_{d,q}$ be the generating set for $\Gamma_d(q)$ so that the Cayley graph $\Gamma(\GG,S_{d,q})$ is $DL_d(q)$. We will
show that the word length of an element with respect to $S_{d,q}$ depends only on $\Pi(g)$, and not on $g$ itself. In the course of establishing the formula for word length, it is often sufficient to keep track of the edge types along a path, rather than the edge labels
themselves. Given a vertex $v \in DL_d(q)$, we have defined edges of type ${\textbf e_i}- {\textbf e_j}$ emanating from $v$.
By a path of type $\alpha_1 \alpha _2 \cdots \alpha_r$ starting at $v$, where $\alpha_k=({\textbf e_{i_k}}-{\textbf
e_{j_k}})^{p_k}$, with $p_k\geq 0$ for each $k$, we mean a path beginning at $v$ which follows $p_1$ edges of type ${\textbf
e_{i_1}}-{\textbf e_{j_1}}$, then $p_2$ edges of type ${\textbf e_{i_2}}-{\textbf e_{j_2}}$, and so on.

 We begin by defining a function $f$ from $\Gamma_d(q)$ to the natural numbers, a candidate for the word length function $l: \GG \rightarrow \mathbb{N}$ for elements of $\GG$ with respect to the generating set $S_{d,q}$.

\begin{definition}\label{thm:wordlength}
Let $g \in \Gamma_d(q)$, with $\Pi(g)= \mlg$, and $\sigma$ in $\Sigma_d$, the symmetric group on $d$ letters.  Define
\begin{itemize}
\item $A_{\sigma(d)}(g)= \sum_{j=1}^d m_{\sigma(j)}(g)$ and
    $A_{\sigma(i)}(g)=\sum_{j=2}^{i}m_{\sigma(j)}(g)+\sum_{k=i}^{d-1}l_{\sigma(k)}(g)$ for $2 \leq i \leq d-1$.

    \smallskip

\item $f_{\sigma}(g,i)=m_{\sigma(1)}(g)+l_{\sigma(d)}(g)+ A_{\sigma(i)}(g)$ for $2 \leq i \leq d$.

    \smallskip

\item $f_{\sigma}(g)= \max_{2 \leq i \leq d} f_{\sigma}(g,i)$.

    \smallskip

\item $f(g)= \min_{\sigma \in \Sigma_d}f_{\sigma}(g)$.

\end{itemize}

\end{definition}

In order to establish that the function $f$ defined above is the word length function, we use the following general lemma.

\begin{lemma}\label{lemma:length}
Given a group $G$ with generating set $S$, let $l: G \rightarrow \N$ be the word length with respect to $S$. If $f:G
\rightarrow \N$ is another function satisfying
\begin{enumerate}
\item $f(g)=0$ if and only if $g$ is the identity element, \item For every $g \in G$, $l(g) \geq f(g)$, \item For every
    $g \in G$, there exists some $s \in S$ with $f(gs)=f(g)-1$,
\end{enumerate}
then $l(g)=f(g)$ for every $g \in G$.
\end{lemma}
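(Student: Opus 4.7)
The plan is to prove $l(g) = f(g)$ by strong induction on $f(g)$, using hypothesis (2) to get one inequality for free and hypotheses (1) and (3) to drive the other inequality.

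First I would handle the base case $f(g) = 0$. By hypothesis (1), this forces $g$ to be the identity element, and so $l(g) = 0 = f(g)$ immediately.

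For the inductive step, suppose the statement holds for all $h \in G$ with $f(h) < n$, and let $g \in G$ with $f(g) = n \geq 1$. By hypothesis (3), there exists $s \in S$ with $f(gs) = f(g) - 1 = n - 1$. By the inductive hypothesis, $l(gs) = f(gs) = n - 1$. Since $s \in S$ (and $S$ is a generating set, which in the word-length context is symmetric, so $s^{-1}$ is also a generator), we have $l(g) = l((gs) \cdot s^{-1}) \leq l(gs) + 1 = n$. Combined with hypothesis (2), which gives $l(g) \geq f(g) = n$, this yields $l(g) = n = f(g)$.

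There is essentially no obstacle here: the lemma is a clean abstraction of the standard ``descent'' argument for verifying that a candidate function equals word length. The only subtlety worth flagging is the implicit assumption that $S$ is symmetric (or that one uses $S \cup S^{-1}$ for word length), which is needed to conclude $l(g) \leq l(gs) + 1$ from $s \in S$; in the paper's setting the generating set $S_{d,q}$ is explicitly symmetric since it is given as a list of matrices together with their inverses, so this is automatic.
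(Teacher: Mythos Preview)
Your proof is correct and is essentially the same argument as the paper's: the paper simply iterates property (3) $n$ times to write $g = s_n^{-1}\cdots s_1^{-1}$ and conclude $l(g)\leq f(g)$, whereas you phrase the same descent as a strong induction on $f(g)$. Your remark about the symmetry of $S$ is apt and matches the paper's setting.
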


\begin{proof}
Let $g \in G$, and suppose $f(g)=n$. Then by property (3) there exist $s_1, s_2, \dots, s_n \in S$ satisfying $f(gs_1s_2
\cdots s_n)=0$. By property (1), $g=s_n^{-1} \cdots s_2^{-1}s_1^{-1}$, so $l(g) \leq f(g)$. Hence by property (2) we have
$l(g)=f(g)$.
\end{proof}

Clearly, for the function $f$ defined in Definition \ref{thm:wordlength} we have $f(g)=0$ if and only if $g$ is the identity element. The other two properties of the function $f$ will be
verified in Propositions \ref{prop2} and \ref{prop:decrease} below. It then follows from Lemma \ref{lemma:length} that the function $f$ defined in Definition \ref{thm:wordlength} is the word length function for $\GG$ with respect to the generating set $S_{d,q}$.

\begin{proposition}\label{prop2}
Let $g \in \Gamma_d(q)$ with $\Pi(g)= \mlg$, let $f(g)$ be as in Definition \ref{thm:wordlength} and let $l(g)$ be the word length of $g$ with respect to the generating set $S_{d,q}$. Then $l(g) \geq f(g)$.
\end{proposition}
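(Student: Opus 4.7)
The plan is to associate to a geodesic path $\gamma$ for $g$ in $DL_d(q)$ a natural permutation $\sigma$ of the trees, and then verify $l(g) \geq f_\sigma(g,i)$ for every $i \in \{2,\ldots,d\}$, yielding $l(g) \geq f_\sigma(g) \geq f(g)$. The path $\gamma$ projects to a walk $\gamma_i$ in each tree $T_i$ from $o_i$ to $x_i$, which must visit the greatest common ancestor $c_i := o_i \curlywedge x_i$ at height $-m_i$. Let $\tau_i$ be the last moment at which $\gamma_i$ is at $c_i$, and pick $\sigma \in \Sigma_d$ so that $\tau_{\sigma(1)} \leq \cdots \leq \tau_{\sigma(d)}$, breaking ties arbitrarily.

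Each edge of $\gamma$ is of type $\textbf{e}_a - \textbf{e}_b$, simultaneously contributing an ascent in $T_a$ and a descent in $T_b$. For each $j$, the walk $\gamma_{\sigma(j)}$ must perform at least $m_{\sigma(j)}$ descents during $[0,\tau_{\sigma(j)}]$ (to drop from height $0$ to $-m_{\sigma(j)}$) and at least $l_{\sigma(j)}$ ascents during $[\tau_{\sigma(j)},N]$ (inside the unique subtree of $c_{\sigma(j)}$ containing $x_{\sigma(j)}$, which $\gamma_{\sigma(j)}$ cannot leave after $\tau_{\sigma(j)}$ without revisiting $c_{\sigma(j)}$). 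Since descents in distinct trees are carried by distinct edges, summing these bounds gives $\tau_{\sigma(i)} \geq \sum_{j \leq i} m_{\sigma(j)}$ and $N - \tau_{\sigma(i)} \geq \sum_{j \geq i} l_{\sigma(j)}$, whose sum unpacks to exactly $f_\sigma(g,i)$ for $2 \leq i \leq d-1$.

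The hard case will be $i=d$, where this crude sum gives only $\sum_j m_{\sigma(j)} + l_{\sigma(d)}$, falling short of $f_\sigma(g,d)$ by exactly $m_{\sigma(1)}$. To recover the missing term I will use the identity $\sum_k h_k \equiv 0$: at time $\tau_{\sigma(1)}$ the $\sigma(1)$-coordinate sits at $-m_{\sigma(1)}$, forcing the heights in the remaining trees to sum to $+m_{\sigma(1)}$. Equivalently, the $m_{\sigma(1)}$ early descents in $T_{\sigma(1)}$ have produced $m_{\sigma(1)}$ ``wasteful'' ascents distributed across the trees $T_{\sigma(j)}$, $j \geq 2$, each of which must be compensated by an extra descent before $c_{\sigma(j)}$ can be reached. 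Writing $u'_j$ for the number of ascents of $\gamma_{\sigma(j)}$ during $[0,\tau_{\sigma(j)}]$, the net height identity gives the descent count on the same interval as $u'_j + m_{\sigma(j)}$, and a short counting argument (using that $\tau_{\sigma(1)}$ equals the total descent count on $[0,\tau_{\sigma(1)}]$, which is at least the $u_1 + m_{\sigma(1)}$ descents in $T_{\sigma(1)}$) produces $\sum_{j \geq 2} u'_j \geq m_{\sigma(1)}$. Summing descents over distinct trees then yields
\[ \tau_{\sigma(d)} \;\geq\; \sum_{j=1}^d \bigl(u'_j + m_{\sigma(j)}\bigr) \;\geq\; m_{\sigma(1)} + \sum_{j=1}^d m_{\sigma(j)}, \]
and combining with $N - \tau_{\sigma(d)} \geq l_{\sigma(d)}$ delivers $N \geq f_\sigma(g,d)$.

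The principal obstacle will be the bookkeeping in the $i=d$ step: one must verify that the individual descent counts $u'_j + m_{\sigma(j)}$ sum to a legitimate lower bound on $\tau_{\sigma(d)}$ because each edge has a unique descent tree, and justify $\sum_{j \geq 2} u'_j \geq m_{\sigma(1)}$ by partitioning the pre-$\tau_{\sigma(1)}$ edges according to their ascent tree. The $2 \leq i \leq d-1$ cases, by contrast, should fall out immediately from the monotonicity of the $\tau_{\sigma(j)}$ and the basic descent/ascent counts.
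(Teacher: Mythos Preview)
Your proposal is correct and follows essentially the same strategy as the paper: order the trees by when the path visits the common ancestors $c_i = o_i \curlywedge x_i$, count descents before and ascents after those visit times to obtain $n \geq f_\sigma(g,i)$ for $2 \leq i \leq d-1$, and then invoke the height constraint $\sum_k h_k = 0$ at the earliest such visit to recover the extra $m_{\sigma(1)}$ term needed for the case $i=d$. The only difference is that the paper orders the trees by \emph{first} visits to the $c_i$ (defined iteratively, so that after $v^{j-1}$ one looks for the first time a \emph{new} coordinate reaches its ancestor), whereas you order by the \emph{last} visits $\tau_i$; this is a cosmetic variation and both choices make the counting go through.
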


\begin{proof}

Let $\gamma$ be a path of length $n$ in $DL_d(q)$ from $o$ to the vertex $x$
identified with $g$, thus $\gamma$ corresponds naturally to a word $a_1a_2a_3 \cdots a_n$ with $a_i \in S_{d,q}$ for $1 \leq i \leq n$. We will show that for some choice of $\sigma \in \Sigma_d$
we have $n \geq f_{\sigma}(g,i)$  for every $2 \leq i \leq d$. It follows that $n \geq f_{\sigma}(g) \geq f(g)$, and thus $l(g) \geq f(g)$.

We begin by choosing the permutation $\sigma \in \Sigma_d$. Along the path $\gamma$ from $o$ to $x$, there must be points
where the $k^{th}$ coordinate is $y_k=o_k \curlywedge x_k $ for $1 \leq k \leq d$. Let $v^1$ be the first such point, so
$v^1_{i_1}=y_{i_1}$ for some $i_1$ with $1 \leq i_1 \leq d$. By the definition of $v^1$, we know that $v^1_k \curlywedge x_k
=y_k$ for $k \neq i_1$. Thus, on the portion on the path from $v^1$ to $x$, there must be points where the $k^{th}$
coordinate is $y_k=o_k \curlywedge x_k $ for each $1 \leq k \leq d$, $k \neq i_1$. Let $v^2$ be the first such point, so
$v^2_{i_2}=y_{i_2}$ for some $i_2$ with $1 \leq i_2 \leq d$, $i_2 \neq i_1$. Continuing in this manner, we define points
$v^1,v^2, \dots, v^d$, each with a distinct associated coordinate $i_1, i_2, \dots i_d$ such that the $i_k^{th}$ coordinate
of $v^k$ is $y_{i_k}$.  Let $\sigma \in \Sigma_d$ be the unique permutation defined by $\sigma(k)=i_k$ for $1 \leq k \leq
d$.

First we consider the point $v^j$, for $2 \leq j \leq d-1$, and suppose that the prefix $a_1 \dots
a_r$ corresponds to the subpath of $\gamma$ starting at $o$ and ending at $v^j$. Then for every $p$ with $1 \leq p \leq j$ the path $a_1 \dots  a_r$ must contain at least
$m_{\sigma(p)}(g)$ edges of type ${\textbf e_t}-{\textbf e_{\sigma(p)}}$, where $t \neq {\sigma(p)}$ may vary by edge, so $r
\geq \sum_{p=1}^{j}m_{\sigma(p)}(g)$. However, for every $p$ with $j \leq p \leq d$, the path $a_{r+1} \dots a_n$ must
contain at least $l_{\sigma(p)}(g)$ edges of type ${\textbf e_{\sigma(p)}}-{\textbf e_t}$, where $t \neq {\sigma(p)}$ may
vary by edge, so $n-r \geq \sum_{p=j}^{d}l_{\sigma(p)}(g)$. Thus,
\begin{align*}n=r+(n-r) &\geq \sum_{p=1}^{j}m_{\sigma(p)}(g) + \sum_{p=j}^{d}l_{\sigma(p)}(g)\\
&=m_{\sigma(1)}(g)+A_{\sigma(j)}(g) +l_{\sigma(d)}(g)\\ &=f_{\sigma}(g,j)
\end{align*} for every $2 \leq j \leq d-1$.

For the case $j=d$, we use a slightly different argument. In this case, let  $a_1 \dots a_r$ be the path from $o$ to $v^1$,
and let $a_{r+1} \dots a_s$ be the path from $v^1$ to $v^d$. Then  the path $a_1 \dots a_r$ must contain at least
$m_{\sigma(1)}(g)$ edges of type ${\textbf e_t}-{\textbf e_{\sigma(1)}}$, so $r \geq m_{\sigma(1)}(g)$. Similarly, the path
$a_{s+1} \dots a_n$ must contain at least $l_{\sigma(d)}(g)$ edges of type ${\textbf e_{\sigma(d)}}-{\textbf e_t}$, so $n-s
\geq l_{\sigma(d)}(g)$.

For each $p \neq 1$,  $y_{\sigma(p)} \curlywedge v^1_{\sigma(p)} = y_{\sigma(p)}$, so for each such $p$, there must by at
least $h_{\sigma(p)}(v^1_{\sigma(p)})- h_{\sigma(p)}( y_{\sigma(p)})$ letters corresponding to generators of type ${\textbf
e_t}-{\textbf e_{\sigma(p)}}$ for various choices of $t$ in the word $a_{r+1} \cdots a_s$. Thus, $s-r \geq \sum_{p=2}^d
h_{\sigma(p)}(v^1_{\sigma(p)})- h_{\sigma(p)}(y_{\sigma(p)})$. Now since $\sum_{p=1}^d h_{\sigma(p)}(v_{\sigma(p)}^1) = 0$
and $h_{\sigma(1)}(v^1_{\sigma(1)})=-m_{\sigma(1)}(g)$, we must have $\sum_{p=2}^d h_{\sigma(p)}(v^1_{\sigma(p)}) =
-h_{\sigma(1)}(v^1_{\sigma(1)})=m_{\sigma(1)}(g)$. Furthermore, $h_{\sigma(p)}(y_{\sigma(p)})=-m_{\sigma(p)}(g)$ for every $2 \leq p \leq d$. Hence,

  \begin{align*}s-r &\geq \sum_{p=2}^d \left( h_{\sigma(p)}(v^1_{\sigma(p)})- h_{\sigma(p)}(y_{\sigma(p)}) \right) \\
  &= m_{\sigma(1)}(g)- \sum_{p=2}^d h_{\sigma(p)}(y_{\sigma(p)})\\
  &= \sum_{p=1}^d m_{\sigma(p)}(g) = A_{\sigma(d)}(g).
  \end{align*}
 Thus we have

 \begin{align*} n &= r + (s-r) + (n-s) \\
 &\geq m_{\sigma(1)}(g)+ A_{\sigma(d)}(g)+l_{\sigma(d)}(g)\\
 &=f_{\sigma}(g,d).
 \end{align*}

 Hence, we have shown that $n \geq f_{\sigma}(g,j)$  for every $2 \leq j \leq d$, as desired.

\end{proof}

To complete the argument, we must prove that $f$ satisfies the third and final property of Lemma \ref{lemma:length}. In
doing so, it is often necessary to keep track of which values of $l_{\chi(i)}(g)$ in $\Pi(g)$ are zero for a given
permutation $\chi$, so we first prove several preliminary lemmas.

\begin{lemma}\label{lemma:index-set}
Let $\chi \in \Sigma_d$ be any permutation and $g \in \GG$ any nontrivial element.
\begin{enumerate}
\item If $l_{\chi(d)}(g)=0$, let $n$ be the maximal value of $j$ with $1 \leq j \leq d-1$ so that  $l_{\chi(j)}(g) \neq
    0$.
 Then
$$\max_{2 \leq i \leq d} A_{\chi(i)}(g) = \max_{2 \leq i \leq n, i=d} A_{\chi(i)}(g)$$
\item If $l_{\chi(1)}(g)=0$, let $k$ be the minimum value of $j$ with $2 \leq j \leq d$ so that $l_{\chi(j)}(g) \neq 0$.
    Then
$$\max_{2 \leq i \leq d} A_{\chi(i)}(g) = \max_{k \leq i \leq d} A_{\chi(i)}(g).$$
\end{enumerate}
\end{lemma}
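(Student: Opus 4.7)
The plan is to prove both parts by direct inspection of the piecewise formula
\[
A_{\chi(i)}(g)=\sum_{j=2}^{i}m_{\chi(j)}(g)+\sum_{k=i}^{d-1}l_{\chi(k)}(g) \text{ for } 2\leq i\leq d-1, \qquad A_{\chi(d)}(g)=\sum_{j=1}^{d}m_{\chi(j)}(g),
\]
together with the fact that every $m_{\chi(j)}(g)$ and $l_{\chi(j)}(g)$ is a nonnegative integer. In each part the strategy is to show that any index $i$ outside the asserted restricted range is dominated, in the value of $A_{\chi(i)}(g)$, by some index that \emph{is} in the range, and therefore cannot realize $\max_{2\leq i\leq d}A_{\chi(i)}(g)$.

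For part (1), the hypothesis $l_{\chi(d)}(g)=0$ together with the maximality of $n$ forces $l_{\chi(k)}(g)=0$ for every $n+1\leq k\leq d-1$. I would therefore check that for any $i$ with $n+1\leq i\leq d-1$ the tail sum $\sum_{k=i}^{d-1}l_{\chi(k)}(g)$ is zero, so that $A_{\chi(i)}(g)=\sum_{j=2}^{i}m_{\chi(j)}(g)\leq\sum_{j=1}^{d}m_{\chi(j)}(g)=A_{\chi(d)}(g)$, using only nonnegativity of the $m$'s. This removes $\{n+1,\ldots,d-1\}$ from the set of indices that can realize the maximum, leaving $\{2,\ldots,n\}\cup\{d\}$, as required.

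For part (2), the minimality of $k$ together with $l_{\chi(1)}(g)=0$ gives $l_{\chi(j)}(g)=0$ for every $1\leq j<k$. I would then split on whether $k\leq d-1$ or $k=d$. In the first case, for $2\leq i<k$ the vanishing of the relevant $l$'s gives $\sum_{j=i}^{d-1}l_{\chi(j)}(g)=\sum_{j=k}^{d-1}l_{\chi(j)}(g)$, and a short cancellation yields $A_{\chi(i)}(g)-A_{\chi(k)}(g)=-\sum_{j=i+1}^{k}m_{\chi(j)}(g)\leq 0$. In the second case, $k=d$ means every $l_{\chi(j)}(g)$ with $1\leq j\leq d-1$ vanishes, so $A_{\chi(i)}(g)=\sum_{j=2}^{i}m_{\chi(j)}(g)\leq A_{\chi(d)}(g)$ for each $2\leq i\leq d-1$. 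Either way, indices below $k$ are dominated by an index in $\{k,\ldots,d\}$, giving the claim.

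The argument is elementary bookkeeping on a piecewise-defined sum, and I do not expect a real obstacle. The only mild subtlety is the slightly different form of $A_{\chi(d)}(g)$ compared to $A_{\chi(i)}(g)$ for $i\leq d-1$, which is what forces the case split in part (2); no deeper input from the geometry of $DL_d(q)$ beyond nonnegativity of the projection coordinates is required.
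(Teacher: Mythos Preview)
Your proposal is correct and follows essentially the same approach as the paper. The paper's proof is terser—it simply notes that $A_{\chi(i)}(g)\leq A_{\chi(d)}(g)$ for $n+1\leq i\leq d-1$ in part (1) and $A_{\chi(i)}(g)\leq A_{\chi(k)}(g)$ for $2\leq i\leq k-1$ in part (2)—but the underlying computation is exactly the one you spell out; your explicit case split on $k\leq d-1$ versus $k=d$ merely makes visible what the paper leaves implicit. The only detail the paper adds that you omit is the one-line observation that nontriviality of $g$ guarantees the indices $n$ and $k$ actually exist.
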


\begin{proof}
Since $g$ is nontrivial, the values of $n$ and $k$ defined in the statement above both exist. The proof of (1) follows from the fact that if $l_{\chi(n+1)}(g) = l_{\chi(n+2)}(g) = \cdots = l_{\chi(d-1)}(g) = 0$, then for
$n+1 \leq i \leq d-1$ we have $A_{\chi(i)}(g) \leq A_{\chi(d)}(g)$. Similarly, to prove (2), if $l_{\chi(2)}(g)= \cdots =
l_{\chi(k-1)}(g)=0$ for $2 \leq i \leq k-1$ we have $A_{\chi(i)}(g) \leq A_{\chi(k)}(g)$.
\end{proof}

\begin{lemma}\label{prop:main}
Fix $g \in \GG$.  Let $\sigma \in
\Sigma_d$ with $l_{\sigma(1)}(g) = 0$.  Let $\tau \in \Sigma_d$ be defined by $\tau(i) = \sigma(i+1)$ for $1 \leq i <d$ and
$\tau(d) = \sigma(1)$.  Then $f_{\sigma}(g) \geq f_{\tau}(g)$.
\end{lemma}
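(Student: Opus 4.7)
\emph{Plan.} The strategy is to match each index of $f_{\tau}$ with a suitable index of $f_{\sigma}$ via the cyclic shift $\phi \colon \{2,\ldots,d\} \to \{2,\ldots,d\}$ given by $\phi(j)=j+1$ for $j<d$ and $\phi(d)=2$, and to prove the termwise inequality $f_{\tau}(g,j) \leq f_{\sigma}(g,\phi(j))$. Taking the maximum over $j$ then yields $f_{\tau}(g) \leq f_{\sigma}(g)$. The shift $\phi$ mirrors the way $\tau$ is obtained from $\sigma$ by cycling $\sigma(1)$ to the last slot, and the hypothesis $l_{\sigma(1)}(g)=0$ is what frees up the term $l_{\tau(d)}(g)$ on the $\tau$ side.

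To establish the termwise bound I would simply expand both sides from the definitions. Starting with $2 \leq j \leq d-1$, substituting $\tau(i)=\sigma(i+1)$ into $A_{\tau(j)}(g)$ and using $l_{\tau(d)}(g)=l_{\sigma(1)}(g)=0$, reindexing gives
\[
A_{\tau(j)}(g) \;=\; \sum_{i=3}^{j+1} m_{\sigma(i)}(g) \;+\; \sum_{k=j+1}^{d} l_{\sigma(k)}(g).
\]
Combining this with $m_{\tau(1)}(g)=m_{\sigma(2)}(g)$ and comparing to $f_{\sigma}(g,j+1)$ yields the clean identity $f_{\sigma}(g,j+1)-f_{\tau}(g,j) = m_{\sigma(1)}(g) \geq 0$ in the generic range $2 \leq j \leq d-2$. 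The boundary case $j=d-1$ (where $A_{\sigma(d)}(g)$ is given by the different ``top'' formula $\sum_{i=1}^d m_{\sigma(i)}(g)$) is a variant of the same computation and produces an even larger nonnegative difference, namely $2m_{\sigma(1)}(g)$.

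The case $j=d$, corresponding to the wraparound $\phi(d)=2$, is the most interesting step. Here $A_{\tau(d)}(g)=\sum_i m_{\tau(i)}(g)=A_{\sigma(d)}(g)$, so $f_{\tau}(g,d)=m_{\sigma(2)}(g)+A_{\sigma(d)}(g)$. Meanwhile expanding $f_{\sigma}(g,2)$ and using $l_{\sigma(1)}(g)=0$ gives $f_{\sigma}(g,2) = m_{\sigma(1)}(g)+m_{\sigma(2)}(g)+\sum_{k=1}^{d} l_{\sigma(k)}(g)$. To match these I would invoke the defining Diestel--Leader identity $\sum_k l_{\sigma(k)}(g) = \sum_k m_{\sigma(k)}(g)$ (equivalent to $\sum_i h_i(x_i)=0$), which converts the $l$-sum into $A_{\sigma(d)}(g)$ and again gives $f_{\sigma}(g,2)-f_{\tau}(g,d) = m_{\sigma(1)}(g) \geq 0$.

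Combining the three cases, $f_{\tau}(g,j) \leq f_{\sigma}(g,\phi(j)) \leq f_{\sigma}(g)$ for every $j \in \{2,\ldots,d\}$, and maximizing yields the lemma. The proof is almost entirely bookkeeping, and I expect the main obstacle to be tracking the index shifts cleanly through the cases where the $A_{\chi(i)}$-formula switches between its interior form and its ``top'' form at $i=d$. The one piece of genuine input beyond algebra is the height-sum-zero constraint, used precisely in the wraparound case to reconcile the two formulas; Lemma \ref{lemma:index-set} does not appear to be needed here.
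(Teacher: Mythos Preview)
Your proposal is correct, and in fact it is cleaner than the paper's own argument. The paper proceeds by first invoking Lemma~\ref{lemma:index-set} to locate an index $i$ (with $k \leq i \leq d$) at which $f_{\sigma}(g)=f_{\sigma}(g,i)$, and then proves $f_{\sigma}(g,i) \geq f_{\tau}(g,j)$ for \emph{every} $j$, splitting into three cases; the case $j=d-1$ further bifurcates according to whether the fixed maximizer $i$ is $<d$ or $=d$. By contrast, you never need to identify the maximizer for $\sigma$: your bijection $\phi$ gives, for each $j$, a \emph{specific} index with $f_{\tau}(g,j) \leq f_{\sigma}(g,\phi(j)) \leq f_{\sigma}(g)$, and the computations yield the uniform differences $m_{\sigma(1)}(g)$ (or $2m_{\sigma(1)}(g)$ at $j=d-1$). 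This eliminates the need for Lemma~\ref{lemma:index-set}, the nontriviality assumption on $g$, and the subcase split in Case~3 of the paper. The one substantive input---the Diestel--Leader height constraint $\sum_k l_k(g)=\sum_k m_k(g)$---enters in both proofs at essentially the same place (your wraparound case $j=d$, the paper's Case~2), so the underlying content is the same; your packaging is just tighter.
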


\begin{proof}

First note that for $2 \leq i \leq d-2$ we have
$$A_{\sigma(i+1)}(g) = m_{\sigma(2)}(g) + \cdots + m_{\sigma(i+1)}(g) + l_{\sigma(i+1)}(g) + \cdots  + l_{\sigma(d-1)}(g)$$
and
\begin{align*} \ti(g) &= m_{\tau(2)}(g) + \cdots + m_{\tau(i)}(g) + l_{\tau(i)}(g) + \cdots  l_{\tau(d-1)}(g) \\
 &= m_{\sigma(3)}(g) + \cdots + m_{\sigma(i+1)}(g) + l_{\sigma(i+1)}(g) + \cdots + l_{\sigma(d)}(g).
 \end{align*}
Hence for $2 \leq i \leq d-2$ we have $$A_{\sigma(i+1)}(g) = \ti(g) +m_{\sigma(2)}(g)-l_{\sigma(d)}(g).$$

The lemma is clearly true if $g$ is the identity element, so we may assume for the remainder of the proof that $g$ is nontrivial. Using the definition of $k$ given in Lemma \ref{lemma:index-set} we have $\max_{2 \leq i \leq d} A_{\sigma(i)}(g) = \max_{k \leq i \leq d}
A_{\sigma(i)}(g)$, we may assume that $f_{\sigma}(g) = f_{\sigma}(g,i)$ for $k \leq i \leq d$, that is, $f_{\sigma}(g) =
m_{\sigma(1)}(g) + l_{\sigma(d)}(g) + \si(g)$ for some $i$ with $k \leq i \leq d$. We must show for each $j$ with $2 \leq j
\leq d$ that $f_{\sigma}(g,i) \geq f_{\tau}(g,j)$.  From this it follows that $f_{\sigma}(g) \geq f_{\tau}(g)$ as desired.
We consider three subcases, as follows.

\textbf{Case 1.} Suppose that $2 \leq j \leq d-2$.  Using the formula above, we see that
\begin{align*}
f_{\sigma}(g,i)&= m_{\sigma(1)}(g) + l_{\sigma(d)}(g) + A_{\sigma(i)}(g)\\
 &\geq m_{\sigma(1)}(g) + l_{\sigma(d)}(g) + A_{\sigma(j+1)}(g) \\
 &= m_{\sigma(1)}(g) + l_{\sigma(d)}(g) + A_{\tau(j)}(g) + m_{\sigma(2)}(g) - l_{\sigma(d)}(g) \\
 & \geq m_{\sigma(2)}(g) + A_{\tau(j)}(g) = m_{\tau(1)}(g) + l_{\tau(d)}(g) + A_{\tau(j)}(g)\\
 & = f_{\tau}(g,j)
\end{align*}
where the first inequality holds because $f_{\sigma}(g)=f_{\sigma}(g,i)$ and thus $A_{\sigma(i)}(g) \geq A_{\sigma(j)}(g)$
for $i \neq j$, and the penultimate equality holds because $l_{\tau(d)}(g) = l_{\sigma(1)}(g) =0$.

\textbf{Case 2.} Suppose that $j=d$.  Then,
\begin{align*}
f_{\sigma}(g,i) &=  m_{\sigma(1)}(g) + l_{\sigma(d)}(g) + A_{\sigma(i)} (g)\\ &\geq m_{\sigma(1)}(g) + l_{\sigma(d)}(g) +
A_{\sigma(k)}(g) \\ &= m_{\sigma(1)}(g) + m_{\sigma(2)}(g) + \cdots + m_{\sigma(k)}(g) + l_{\sigma(k)}(g) +
l_{\sigma(k+1)}(g) + \cdots + l_{\sigma(d)}(g) \\ &\geq m_{\sigma(2)}(g) + 0 + l_{\sigma(k)}(g) + l_{\sigma(k+1)}(g) +
\cdots + l_{\sigma(d)}(g)  \\ &=  m_{\tau(1)}(g) + l_{\tau(d)}(g) + \sum_{r=1}^d m_{\tau(r)(g)}  = f_{\tau}(g,d)
 \end{align*}
where the last line relies on the fact thats that $l_{\tau(d)}(g) = l_{\sigma(1)}(g) = 0$ and by our choice of $k$,
$$\sum_{r=1}^d m_{\tau(r)}(g) = \sum_{r=1}^d m_{\sigma(r)}(g) = \sum_{r=1}^d l_{\sigma(r)}(g) = \sum_{r=k}^d
l_{\sigma(r)}(g).$$

\textbf{Case 3. } When $j=d-1$ we differentiate between $2 \leq i \leq d-1$ and $i=d$.  Recall that $f_{\sigma}(g) =
f_{\sigma}(g,i)$.

First let $2 \leq i \leq d-1$ and recall that $l_{\tau(d)}(g) = l_{\sigma(1)}(g) = 0$ by assumption. In this case,
\begin{align*}
f_{\tau}(g,d-1) &= m_{\tau(1)}(g) + m_{\tau(2)}(g) + \cdots + m_{\tau(d-1)}(g) + l_{\tau(d-1)}(g) + l_{\tau(d)}(g) \\ &=
m_{\sigma(2)}(g) + \cdots + m_{\sigma(d)}(g) + l_{\sigma(d)}(g).
\end{align*}
Additionally, we are assuming that $A_{\sigma(i)} \geq A_{\sigma(d)}$.  Writing out this inequality and canceling identical
terms from both sides of the inequality yields
$$l_{\sigma(i)}(g) + l_{\sigma(i+1)}(g) + \cdots l_{\sigma(d-1)}(g) \geq m_{\sigma(1)}(g) + m_{\sigma(i+1)}(g)  +
m_{\sigma(i+2)}(g)+ \cdots + m_{\sigma(d)}(g).$$
Hence,
\begin{align*}
f_{\sigma}(g,i) &= m_{\sigma(1)}(g) + m_{\sigma(2)}(g) + \cdots + m_{\sigma(i)}(g) + l_{\sigma(i)}(g) + \cdots +
l_{\sigma(d)}(g)\\
 &\geq \left(m_{\sigma(1)}(g)+ \cdots + m_{\sigma(i)}(g)\right) + \left(m_{\sigma(1)}(g) + m_{\sigma(i+1)}(g)  +
 m_{\sigma(i+2)}(g)+ \cdots + m_{\sigma(d)}(g)\right) + l_{\sigma(d)} (g) \\
  &\geq m_{\sigma(2)}(g) + \cdots + m_{\sigma(d)}(g) + l_{\sigma(d)}(g) \\
  &= m_{\tau(1)}(g) + m_{\tau(2)}(g) + \cdots + m_{\tau(d-1)}(g) + l_{\tau(d-1)}(g)+l_{\tau(d)}(g) = f_{\tau}(g,d-1).
\end{align*}

Now assume that $i=d$.  In this case,
\begin{align*}
f_{\sigma}(g,d) &= m_{\sigma(1)}(g) + l_{\sigma(d)}(g) + \sum_{r=1}^d m_{\sigma(i)}(g) \\
&\geq m_{\sigma(2)}(g) + \cdots +m_{\sigma(d)}(g) + l_{\sigma(d)}(g)\\
&= m_{\tau(1)}(g)  + m_{\tau(2)}(g) + \cdots + m_{\tau(d-1)}(g) +
l_{\tau(d-1)}(g) \\
&= m_{\tau(1)}(g)  + m_{\tau(2)}(g) + \cdots + m_{\tau(d-1)}(g) +
l_{\tau(d-1)}(g) + l_{\tau(d)}(g) = f_{\tau}(g,d-1) .
\end{align*}
\end{proof}

 If $g \in \Gamma_d(q)$ is nontrivial,  let $\Theta_g = \{ \sigma \in \Sigma_d| f(g) = f_{\sigma}(g) \}$, and  let $\Theta'_g =
 \{\sigma \in \Theta_g | l_{\sigma(1)}(g) \neq 0 \}$. Then Lemma \ref{prop:main} has the following corollary.

\begin{corollary}\label{cor:main}
If $g \in \GG$ is not the identity element, then $\Theta_g'$ is not empty, that is, there exists $\sigma \in \Sigma_d$ such
that $f(g) = f_{\sigma}(g)$ and $l_{\sigma(1)}(g) \neq 0$.
\end{corollary}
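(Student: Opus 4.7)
The plan is to start from any minimizer $\sigma_0 \in \Theta_g$ and, as long as $l_{\sigma_k(1)}(g) = 0$, to repeatedly invoke Lemma \ref{prop:main} so as to cyclically shift the permutation while staying inside $\Theta_g$. Each iteration advances the first-position value one step along the sequence $\sigma_0(1), \sigma_0(2), \sigma_0(3), \ldots$, and the process stops as soon as $l_{\sigma_k(1)}(g) \neq 0$; that permutation is the desired element of $\Theta_g'$.

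More concretely, first observe that $\Theta_g$ is nonempty since $\Sigma_d$ is finite and $f(g) = \min_\sigma f_\sigma(g)$ is attained. Choose $\sigma_0 \in \Theta_g$. If $l_{\sigma_0(1)}(g) \neq 0$, we are done. Otherwise Lemma \ref{prop:main} applies to $\sigma_0$ and produces the cyclically shifted permutation $\sigma_1$ with $f_{\sigma_1}(g) \leq f_{\sigma_0}(g) = f(g)$. Since $f(g)$ is the minimum, equality must hold, so $\sigma_1 \in \Theta_g$. Iterating this construction yields permutations $\sigma_0, \sigma_1, \sigma_2, \ldots \in \Theta_g$ with $\sigma_k(1) = \sigma_0(k+1)$ for $0 \leq k \leq d-1$, and the iteration continues precisely as long as $l_{\sigma_k(1)}(g) = 0$.

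The only substantive point is to show that the iteration actually halts. If it failed to halt within $d$ steps, then $l_{\sigma_0(j)}(g) = 0$ for every $j \in \{1, 2, \ldots, d\}$; because $\sigma_0$ is a bijection, this forces $l_i(g) = 0$ for all $i \in \{1,\ldots,d\}$. Combined with the Diestel-Leader height relation $\sum_{i=1}^d (l_i(g) - m_i(g)) = 0$ and the nonnegativity of each $m_i(g)$, we would then get $m_i(g) = 0$ for all $i$, so $\Pi(g)$ is trivial, contradicting the hypothesis that $g$ is not the identity. Hence some iterate $\sigma_k$ satisfies $l_{\sigma_k(1)}(g) \neq 0$ while remaining in $\Theta_g$, giving $\sigma_k \in \Theta_g'$ as required. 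The main (mild) obstacle is keeping track of which cyclic shift is being applied at each step; everything else is a direct consequence of Lemma \ref{prop:main} together with the sum constraint defining $DL_d(q)$.
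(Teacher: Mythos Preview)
Your proof is correct and follows essentially the same approach as the paper: start from any $\sigma_0\in\Theta_g$ and repeatedly apply Lemma~\ref{prop:main} to cyclically shift until the first coordinate has $l_{\sigma_k(1)}(g)\neq 0$. The paper phrases this as applying Lemma~\ref{prop:main} exactly $k-1$ times, where $k$ is the index from Lemma~\ref{lemma:index-set}(2), whereas you argue termination directly from the height relation $\sum_i(l_i-m_i)=0$; these are the same argument in slightly different packaging.
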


\begin{proof} Suppose that $\chi \in \Theta_{g}$, and $l_{\chi(1)}(g) =0$.  Let $k$ be defined as in Lemma \ref{lemma:index-set}.
Applying Lemma \ref{prop:main} $k-1$ times, we obtain the corollary.

\end{proof}

\begin{lemma}\label{lemma:twozeros}
Let $g \in \GG$ and $\sigma \in \Sigma_d$ with $m_{\sigma(d)}(g)=l_{\sigma(d)}(g)=0$. Define $\tau \in \Sigma_d$ such that
$\tau(i)=\sigma(i-1)$ for $i \geq 2$, and $\tau(1)=\sigma(d)$. Then $f_{\sigma}(g)=f_{\tau}(g)$.
\end{lemma}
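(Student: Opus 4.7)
The plan is to unfold both $f_\sigma(g)$ and $f_\tau(g)$ as explicit sums, then observe that the vanishing of $m_{\sigma(d)}(g)$ and $l_{\sigma(d)}(g)$ together with the cyclic relabeling $\tau(i)=\sigma(i-1)$ ($i\geq 2$), $\tau(1)=\sigma(d)$ makes the two collections $\{f_\sigma(g,i)\}_{i=2}^d$ and $\{f_\tau(g,i)\}_{i=2}^d$ coincide as multisets, so their maxima agree.

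To ease bookkeeping I would write $M_j=m_{\sigma(j)}(g)$ and $L_j=l_{\sigma(j)}(g)$, so that by hypothesis $M_d=L_d=0$. Unpacking the definitions gives
\begin{align*}
f_\sigma(g,i) &= M_1+M_2+\cdots+M_i+L_i+\cdots+L_{d-1}\qquad (2\le i\le d-1),\\
f_\sigma(g,d) &= 2M_1+M_2+\cdots+M_{d-1}.
\end{align*}
Since $\tau(1)=\sigma(d)$ satisfies $m_{\tau(1)}(g)=M_d=0$, and $l_{\tau(d)}(g)=L_{d-1}$, a parallel direct computation with $m_{\tau(j)}(g)=M_{j-1}$ and $l_{\tau(k)}(g)=L_{k-1}$ for $j,k\ge 2$ yields
\begin{align*}
f_\tau(g,i) &= M_1+\cdots+M_{i-1}+L_{i-1}+\cdots+L_{d-1}\qquad (2\le i\le d-1),\\
f_\tau(g,d) &= M_1+\cdots+M_{d-1}+L_{d-1}.
\end{align*}

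Next I would match these term by term. For $3\le i\le d-1$ the formula for $f_\tau(g,i)$ is literally $f_\sigma(g,i-1)$; and the boundary case gives $f_\tau(g,d)=M_1+\cdots+M_{d-1}+L_{d-1}=f_\sigma(g,d-1)$. The only remaining value is $f_\tau(g,2)=M_1+L_1+L_2+\cdots+L_{d-1}$, and it must be shown to equal $f_\sigma(g,d)=2M_1+M_2+\cdots+M_{d-1}$. This is the one step that is not simply a reindexing: it needs the height-balance identity $\sum_{j=1}^d h_j(x_j)=0$, which in our notation (together with $M_d=L_d=0$) becomes $\sum_{j=1}^{d-1}L_j=\sum_{j=1}^{d-1}M_j$. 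Substituting this into $f_\tau(g,2)$ converts the $L$-sum into an $M$-sum and produces exactly $f_\sigma(g,d)$.

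The main (minor) obstacle is simply keeping track of the index shifts between $\sigma$ and $\tau$ and noticing that the $i=2$ case of $f_\tau$ does not correspond to a shift of a $f_\sigma(g,i)$ with $i\le d-1$ but rather to $f_\sigma(g,d)$; this is where the height identity is invoked. Once the bijection $\{f_\tau(g,2),f_\tau(g,3),\dots,f_\tau(g,d)\}=\{f_\sigma(g,d),f_\sigma(g,2),\dots,f_\sigma(g,d-1)\}$ is established, taking maxima on both sides yields $f_\tau(g)=f_\sigma(g)$, completing the proof.
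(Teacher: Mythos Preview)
Your proof is correct and follows exactly the approach the paper indicates: the paper's proof simply asserts that one directly verifies $f_{\sigma}(g,i)=f_{\tau}(g,i+1)$ for $2\le i\le d-1$ and $f_{\sigma}(g,d)=f_{\tau}(g,2)$, and you have carried out precisely that verification, including the one nontrivial step (the $f_\tau(g,2)=f_\sigma(g,d)$ identity) where the height-balance relation $\sum_j L_j=\sum_j M_j$ is needed.
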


\begin{proof}
One directly verifies, using arguments as in the previous lemma, that $f_{\sigma}(g,i)=f_{\tau}(g, i+1)$ for $2 \leq i \leq
d-1$, and $f_{\sigma}(g,d)=f_{\tau}(g,2)$.
\end{proof}

We immediately obtain the following corollary.

\begin{corollary}\label{cor:twozeroes}
If $g \in \GG$ is not the identity element, there exists $\sigma \in \Theta_g$ such that either $l_{\sigma(d)}(g)\neq 0$ or
$ m_{\sigma(d)}(g)\neq 0$.
\end{corollary}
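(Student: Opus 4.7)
The plan is to start with any $\sigma \in \Theta_g$ and iteratively apply Lemma \ref{lemma:twozeros} to cycle ``bad'' coordinates (those with both $m_i(g)=0$ and $l_i(g)=0$) away from position $d$, while staying inside $\Theta_g$ the whole time. Since $g$ is nontrivial, at least one coordinate is ``good'', so the iteration must terminate with a permutation whose last entry is good.

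More precisely, let $Z = \{i \in \{1,\ldots,d\} : m_i(g) = l_i(g) = 0\}$. Since $g$ is nontrivial, $Z$ is a proper subset of $\{1,\ldots,d\}$, so there exists some $i_0 \notin Z$. Pick any $\sigma \in \Theta_g$ (which is nonempty by definition of $f(g)$). If $\sigma(d) \notin Z$, we are done. Otherwise, apply Lemma \ref{lemma:twozeros} to produce $\tau \in \Sigma_d$ with $\tau(1) = \sigma(d)$ and $\tau(i) = \sigma(i-1)$ for $2 \leq i \leq d$; by the lemma, $f_\tau(g) = f_\sigma(g) = f(g)$, so $\tau \in \Theta_g$. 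Note that $\tau(d) = \sigma(d-1)$.

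Iterating this construction, after $k$ applications we obtain a permutation $\rho_k \in \Theta_g$ whose last coordinate is $\sigma(d-k)$ (with indices read cyclically modulo $d$). At each step, either $\rho_k(d) \notin Z$ and we stop, or $\rho_k(d) \in Z$ and the hypothesis of Lemma \ref{lemma:twozeros} holds so we may continue. As the values $\sigma(d), \sigma(d-1), \ldots, \sigma(1)$ exhaust all of $\{1,\ldots,d\}$ and at least one of them (namely $i_0$) lies outside $Z$, there is some smallest $k \in \{0,1,\ldots,d-1\}$ with $\sigma(d-k) \notin Z$. At that step the iteration terminates with a permutation in $\Theta_g$ whose last coordinate satisfies $l_{\rho_k(d)}(g) \neq 0$ or $m_{\rho_k(d)}(g) \neq 0$, as required.

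The only thing to verify carefully is that Lemma \ref{lemma:twozeros} indeed applies at each intermediate step, but this is immediate from the stopping rule: we only invoke it when the current last coordinate lies in $Z$. There is no real obstacle here; the argument is a straightforward cyclic-shift induction once the right set $Z$ and the right termination criterion are identified.
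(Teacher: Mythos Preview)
Your argument is correct and is precisely the intended one: the paper does not give an explicit proof of this corollary, saying only that it is obtained immediately from Lemma~\ref{lemma:twozeros}, and your iterated cyclic-shift argument is exactly how that immediacy unpacks. The termination check (that at least one coordinate lies outside $Z$ because $g$ is nontrivial) and the verification that the lemma's hypothesis holds at each intermediate step are the only points needing care, and you have handled both.
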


The next proposition uses the above lemmas and corollaries to prove that the function $f$ satisfies the third property of
Lemma \ref{lemma:length}.

\begin{proposition}\label{prop:decrease}
Let $g \in \Gamma_d(q)$ be a nontrivial group element, and let $f(g)$ be as in Definition \ref{thm:wordlength}. Then there
exists $s \in S_{d,q}$ with $f(gs)=f(g)-1$.
\end{proposition}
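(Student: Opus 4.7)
The plan is to invoke Corollary \ref{cor:main} to fix $\sigma \in \Theta_g$ with $l_{\sigma(1)}(g) > 0$, and then to construct $s$ as a generator corresponding to an edge of type $\mathbf{e}_{\sigma(1)} - \mathbf{e}_{\sigma(j)}$ (or its reverse) with the edge-label chosen so that the step in $T_{\sigma(1)}$ is ``invisible'' to $f_\sigma$. The crucial observation driving everything is that $l_{\sigma(1)}$ does not appear in any summand of the formula defining $f_\sigma(g)$, so a step that increases $l_{\sigma(1)}$ by $1$ leaves every $f_\sigma(g, i)$ unchanged, freeing the paired step in the other tree to reduce a visible term.

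The argument splits on $(m_{\sigma(d)}(g), l_{\sigma(d)}(g))$. If $l_{\sigma(d)}(g) > 0$, take $s$ of type $\mathbf{e}_{\sigma(1)} - \mathbf{e}_{\sigma(d)}$; since $l_{\sigma(d)}$ is an additive summand in each $f_\sigma(g, i)$, every such value drops by $1$ and $f_\sigma(gs) = f_\sigma(g) - 1$. If $l_{\sigma(d)}(g) = 0$ but $m_{\sigma(d)}(g) > 0$, let $i_{\max}$ be the largest index attaining the maximum in $f_\sigma(g)$: when $i_{\max} = d$, use $\mathbf{e}_{\sigma(d)} - \mathbf{e}_{\sigma(1)}$ with the label that moves in $T_{\sigma(d)}$ toward $o_{\sigma(d)}$, which reduces $m_{\sigma(d)}$ by $1$ and affects only $f_\sigma(g, d)$ (the unique maximizer); when $i_{\max} < d$, the strict inequality $f_\sigma(g, i_{\max}) > f_\sigma(g, d)$ together with $l_{\sigma(d)} = 0$ forces $\sum_{k = i_{\max}}^{d-1} l_{\sigma(k)}(g) > 0$, so some $k \in \{i_{\max}, \ldots, d-1\}$ has $l_{\sigma(k)}(g) > 0$, and we use $\mathbf{e}_{\sigma(1)} - \mathbf{e}_{\sigma(k)}$ to reduce $l_{\sigma(k)}$ by $1$; this drops $f_\sigma(g, i)$ by $1$ for every $i \leq k$, covering every index attaining the maximum (and leaving the values for $i > k \geq i_{\max}$, which are already strictly smaller, intact).

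The remaining case $(m_{\sigma(d)}(g), l_{\sigma(d)}(g)) = (0, 0)$ is the main obstacle and is precisely why the preparatory Lemma \ref{lemma:twozeros} and Corollary \ref{cor:twozeroes} were proved. My plan here is to take $s$ of type $\mathbf{e}_{\sigma(d)} - \mathbf{e}_{\sigma(1)}$, which (since $l_{\sigma(d)} = m_{\sigma(d)} = 0$) changes the pair at position $\sigma(d)$ from $(0, 0)$ to $(0, 1)$ and decreases $l_{\sigma(1)}$ by $1$. Although this raises $f_\sigma(gs)$, I evaluate $f$ instead using the cyclic shift $\tau$ of $\sigma$ supplied by Lemma \ref{lemma:twozeros}, in which the newly non-trivial coordinate sits at position $1$; a direct computation using the Lemma \ref{lemma:twozeros} identities $f_\tau(g, 2) = f_\sigma(g, d)$ and $f_\tau(g, i) = f_\sigma(g, i-1)$ for $i \geq 3$, combined with the effect of $s$ on each $A_{\tau(i)}(gs)$, yields $f_\tau(gs, 2) = f_\sigma(g, d) - 1$ and $f_\tau(gs, i) = f_\tau(g, i)$ for $i \geq 3$. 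When the maximum in $f_\sigma(g)$ is attained at $i = d$, this immediately gives $f(gs) \leq f_\tau(gs) = f(g) - 1$; the delicate residual subcase---when the maximum is also attained at some $i < d$---is handled by reverting to the Case~2 strategy applied directly to $\sigma$, using an appropriate $l_{\sigma(k)} > 0$ to produce an alternative generator, and verifying this final subcase is the subtlest closing step of the proof.
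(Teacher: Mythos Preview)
Your plan establishes only half of the equality $f(gs)=f(g)-1$: in each case you verify $f_\sigma(gs)=f_\sigma(g)-1$ (or $f_\tau(gs)\le f(g)-1$ for an auxiliary $\tau$), which yields the upper bound $f(gs)\le f(g)-1$ since $\sigma\in\Theta_g$. You never address the lower bound $f(gs)\ge f(g)-1$, i.e.\ that no permutation $\rho$ satisfies $f_\rho(gs)\le f(g)-2$. In the subcases where your generator only changes two $l$-values (one up, one down) this lower bound is immediate, because each $l$-coordinate occurs at most once in any single $f_\rho(\cdot,i)$. But in your subcase $i_{\max}=d$ (within $l_{\sigma(d)}=0$, $m_{\sigma(d)}>0$) the move of type $\mathbf e_{\sigma(d)}-\mathbf e_{\sigma(1)}$ toward $o_{\sigma(d)}$ decreases \emph{both} $m_{\sigma(d)}$ and $l_{\sigma(1)}$ by~$1$; for a permutation $\rho$ with $\rho(1)=\sigma(d)$ and $\rho(d)=\sigma(1)$ this can give $f_\rho(gs,i)=f_\rho(g,i)-2$ for $i<d$ and $f_\rho(gs,d)=f_\rho(g,d)-3$. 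Controlling this is exactly the content of the paper's Claim, which occupies the entire second half of its Case~2: given any $\chi\in\Theta_{gs}'$, the paper constructs a tailored permutation $\tau$ by relocating the index $u$ with $\chi(u)=\sigma(d)$ and compares each $A_{\tau(i)}(gs)$ against $\max_j A_{\chi(j)}(gs)$ term by term. Your outline contains no analogue of this argument, and without it the equality in the proposition is unproved.

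There is also a smaller gap in the upper-bound step for that same subcase. You call $d$ ``the unique maximizer,'' but your definition of $i_{\max}$ as the \emph{largest} maximizing index does not exclude a tie at some $j<d$; if $f_\sigma(g,j)=f_\sigma(g,d)$ your move leaves $f_\sigma(gs,j)=f_\sigma(g,j)$ unchanged and hence $f_\sigma(gs)=f_\sigma(g)$. This is easily repaired by splitting instead on whether the maximum is attained at any index $<d$ (your $l_{\sigma(k)}$-strategy then applies, and $m_{\sigma(d)}>0$ forces $\sum_{k\ge i_{\max}}l_{\sigma(k)}>0$). Similarly, in the ``delicate residual subcase'' of your Case~3 you invoke the Case~2 strategy, but there $m_{\sigma(d)}=0$, so the existence of a suitable $k$ with $l_{\sigma(k)}>0$ is no longer guaranteed for the particular $\sigma$ you fixed; the paper avoids this by organizing its case split around Corollary~\ref{cor:twozeroes} (choosing $\sigma\in\Theta_g$ with $m_{\sigma(d)}\neq0$ at the outset of Case~2) rather than first fixing $\sigma\in\Theta_g'$.
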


\begin{proof}
If $g \in S_{d,q}$, that is, $g$ is a generator of $\Gamma_d(q)$, then it is easy to see that $f(g) = 1$ and choosing $s =
g^{-1}$, that $f(gs) = 0$ and the condition of the proposition is satisfied.  From now on we assume that $g \notin S_{d,q}$
and hence for any $s \in S_{d,q}$ we know that $gs$ is nontrivial.

Let $x$ be the vertex in $DL_d(q)$ identified with $g$; recall that we write $\Pi(g)$ for $\Pi(x)$.

\textbf{Case 1.} There exists $\sigma \in \Theta_g$ with $l_{\sigma(d)}(g) \neq 0$. If in addition $l_{\sigma(1)}(g) > 0$,
or $l_{\sigma(1)}(g)=m_{\sigma(1)}(g)=0$, choose $s$ to be any generator corresponding to an edge of type ${\textbf
e_{\sigma(1)}}-{\textbf e_{\sigma(d)}}$. If $l_{\sigma(1)}(g) = 0$ and $m_{\sigma(1)}(g) >0$, let $w$ be the vertex in
$T_{\sigma(1)}$ adjacent to $x_{\sigma(1)}$ on the unique shortest path from $o_{\sigma(1)}$ to $x_{\sigma(1)}$. Choose $s$
to be any generator of type ${\textbf e_{\sigma(1)}}-{\textbf e_{\sigma(d)}}$ so that if $z$ is the vertex in $DL_d(q)$
identified with $gs$, then $z_{\sigma(1)} \neq w$. Then we have:
\begin{enumerate}
\item $(m_{\sigma(d)}(gs),l_{\sigma(d)}(gs)) = (m_{\sigma(d)}(g),l_{\sigma(d)}(g)-1)$, \item
    $(m_{\sigma(1)}(gs),l_{\sigma(1)}(gs)) = (m_{\sigma(1)}(g),l_{\sigma(1)}(g)+1)$, and \item
    $(m_{\sigma(i)}(gs),l_{\sigma(i)}(gs)) = (m_{\sigma(i)}(g),l_{\sigma(i)}(g))$ for $i \neq 1,d$.
\end{enumerate}

But this implies that $A_{\sigma(i)}(gs)=A_{\sigma(i)}(g)$ for every $2 \leq i \leq d$, and hence
\begin{align*}f_{\sigma}(gs)&=m_{\sigma(1)}(gs)+l_{\sigma(d)}(gs)+ \max_{2 \leq i \leq d} A_{\sigma(i)}(gs) \\ &=
m_{\sigma(1)}(g)+(l_{\sigma(d)}(g)-1)+ \max_{2 \leq i \leq d} A_{\sigma(i)}(g)\\ &=f_{\sigma}(g)-1.
 \end{align*}

First we note that the inequality $f(g)-1 \geq f(gs)$ is fairly easy to verify, since $f(g) - 1 = f_{\sigma}(g) - 1 =
f_{\sigma}(gs) \geq f_{\tau}(gs)$ for any $\tau \in \Theta_{gs}$.  Hence $f(g) - 1 \geq f(gs)$.

Now for any $\tau \in \Sigma_d$,  $m_{\tau (i)}(gs) = m_{\tau (i)}(g)$ for every $1 \leq i \leq d$, $l_{\tau (i)}(gs) \neq l_{\tau (i)}(g)$ for only two choices of $i$, and in addition, for one of these values, $l_{\tau (i)}(gs) = l_{\tau (i)}(g)-1$ and for the other, $l_{\tau (i)}(gs) = l_{\tau (i)}(g)+1$. Since for any value of $i$, $l_{\tau (i)}(g)$ (respectively $l_{\tau (i)}(gs)$) appears at most in the formula for $f_{\tau}(g,i)$ (respectively $f_{\tau}(gs,i)$) this implies that $f_{\tau}(g)-1 \leq f_{\tau}(gs)$.
Thus for $\tau \in \Theta_{gs}$, $f(gs)=f_{\tau}(gs) \geq f_{\tau}(g)-1 \geq f(g)-1$, so $f(gs) \geq f(g)-1$ as well. Hence, $f(gs)=
f(g)-1$, as desired.

\textbf{Case 2.} For every $\chi \in \Theta_g$, we assume that $l_{\chi(d)}(g)=0$.  Applying Corollary
\ref{cor:twozeroes} we may choose $\sigma \in \Theta_g$ so that $m_{\sigma(d)}(g) \neq 0$.

Let $w$ be the vertex in $T_{\sigma(d)}$ adjacent to $x_{\sigma(d)}$ on the unique shortest path from $o_{\sigma(d)}$ to
$x_{\sigma(d)}$, and let $n$ be as defined in Lemma \ref{lemma:index-set}. Choose the generator $s \in S_{d,q}$ of type ${\textbf e_{\sigma(d)}}-{\textbf e_{\sigma(n)}}$ so that if
$z$ is the vertex in $DL_d(q)$ identified with $gs$, then $z_{\sigma(d)}=w$. Then we have, for the pair $\sigma$
and $s$:
\begin{enumerate}
\item $(m_{\sigma(d)}(gs),l_{\sigma(d)}(gs)) = (m_{\sigma(d)}(g)-1,l_{\sigma(d)}(g))$, where we note that
    $l_{\sigma(d)}(gs) = l_{\sigma(d)}(g) = 0$, \item $(m_{\sigma(n)}(gs),l_{\sigma(n)}(gs)) =
    (m_{\sigma(n)}(g),l_{\sigma(n)}(g)-1)$,  and \item
    $(m_{\sigma(i)}(gs),l_{\sigma(i)}(gs)) = (m_{\sigma(i)}(g),l_{\sigma(i)}(g))$ for $i \neq n,d$.
\end{enumerate}

For the above choice of $\sigma$ and $s$, we claim that $f_{\sigma}(gs) = f_{\sigma}(g)-1$.  Applying Lemma
\ref{lemma:index-set} to $g$ we see that
$$f_{\sigma}(g) = m_{\sigma(1)}(g) +  l_{\sigma(d)}(g) + \max_{2 \leq i \leq n, i=d} A_{\sigma(i)}(g).$$
As the only index for which $l_{\sigma(i)}(gs) \neq l_{\sigma(i)}(g)$ is $i=n$, we again see that for $j>n$ we have
$l_{\sigma(j)}(gs) =0$. Applying Lemma \ref{lemma:index-set} to $gs$, we see that
$$f_{\sigma}(gs) = m_{\sigma(1)}(gs) +  l_{\sigma(d)}(gs) + \max_{2 \leq i \leq n, i=d} A_{\sigma(i)}(gs).$$
It follows from the definition of $s$ that $A_{\sigma(d)}(gs) = A_{\sigma(d)}(g)-1$.  Similarly, for $2 \leq i \leq n$ we
have $A_{\sigma(i)}(gs) = A_{\sigma(i)}(g)-1$ since neither expression contains $m_{\sigma(d)}$ and both contain
$l_{\sigma(n)}$.  Hence,
$$\max_{2 \leq i \leq n, i=d} A_{\sigma(i)}(gs) = \max_{2 \leq i \leq n, i=d} \left( A_{\sigma(i)}(g)-1 \right) = \left(
\max_{2 \leq i \leq n, i=d} A_{\sigma(i)}(g) \right) - 1.$$
Combining the above reasoning, we see that
\begin{align*}
f_{\sigma}(gs) &= m_{\sigma(1)}(gs) +  l_{\sigma(d)}(gs) + \max_{2 \leq i \leq n, i=d} A_{\sigma(i)}(gs) \\
 &=  m_{\sigma(1)}(g) +  l_{\sigma(d)}(g) + \max_{2 \leq i \leq n, i=d} A_{\sigma(i)}(g) -1 = f_{\sigma}(g) -1.
 \end{align*}

Since $f_{\sigma}(g) = f(g)$ and $f(gs) \leq f_{\sigma}(gs)$ it follows immediately from the fact that $f_{\sigma}(gs)=
f_{\sigma}(g) -1$ that $f(gs) \leq f(g)-1$.

To complete the proof of Proposition \ref{prop:decrease} we must show that $f(gs) \geq f(g)-1$.  First note that for any
$\chi \in \Sigma_d$ it follows from the definition of $f$ that $f_{\chi}(gs) \geq f_{\chi}(g)-3$.  We now show that this
inequality can be improved slightly for $\tau \in \Theta'_{gs}$; for such $\tau$ we will show that $f_{\tau}(gs) \geq f_{\tau}(g)-2$.
Suppose to the contrary that $\tau \in \Theta '_{gs}$ and $f_{\tau}(gs)=f_{\tau}(g)-3$.  This can happen in only one way, namely
all three of the following conditions must be met:
\begin{enumerate}
\item $\tau(1) = \sigma(d)$, \item $\tau(d) = \sigma(n)$, and \item $\max_{2 \leq i \leq n, i=d}A_{\tau(i)}(gs) =
    A_{\tau(d)}(gs)$.
\end{enumerate}
Now $\tau \in \Theta '_{gs}$ implies that $l_{\tau(1)}(gs) \neq 0$, but the first condition above requires that
 $l_{\tau(1)}(gs) = l_{\sigma(d)}(gs) = l_{\sigma(d)}(g) = 0$, a contradiction.  Thus, for all $\tau \in \Theta '_{gs}$, we must
 have $f_{\tau}(gs) \geq f_{\tau}(g)-2$.

It follows from Corollary \ref{cor:main} that $\Theta'_{gs}$ is not empty, so we may choose
$\chi \in \Theta'_{gs}$. If $\chi \notin \Theta_g$, then $f_{\chi}(g) > f_{\sigma}(g)$. Thus we have
$f(gs) = f_{\chi}(gs) \geq f_{\chi}(g)-2 > f_{\sigma}(g)-2$, and hence  $f(gs) \geq f_{\sigma}(g)-1=f(g)-1$, as desired.

If, on the other hand,  $\chi \in \Theta _g$, we make the following claim.

{\bf Claim.} If $\chi \in \Theta _g$, there exists $\tau \in \Theta '_{gs}$ with $f_{\tau}(gs) \geq f_{\tau}(g)-1$.

Proposition \ref{prop:decrease} follows immediately from the Claim, as follows. Let $\tau$ be as in the Claim, so that
$f_{\tau}(gs) \geq f_{\tau}(g)-1$. Then $f(gs) = f_{\tau}(gs) \geq f_{\tau}(g)-1 \geq f(g)-1$, and hence $f(gs) \geq f(g)
-1$, as desired.

To prove the claim, if $f_{\chi}(gs) \geq f_{\chi}(g)-1$ then simply let
$\tau = \chi$. If $f_{\chi}(gs) = f_{\chi}(g)-2$, we use $\chi$ to construct $\tau \in \Theta'_{gs}$ so that $f_{\tau}(gs)
\geq f_{\tau}(g) -1$, as follows.

There exist distinct $u,v \in \{1,2, \cdots ,d\}$ so that $\chi(u) = \sigma(d)$ and $\chi(v) = \sigma(n)$.  We now show that
$1 < u < v < d$. To see that $1<u$, observe that $l_{\sigma(d)}(gs) = 0$ but $l_{\chi(1)}(gs) \neq 0$ since $\chi \in
\Theta'_{gs}$, hence $\sigma(d) \neq \chi(1)$, that is, $u \neq 1$.  To see that $v<d$, observe that $l_{\sigma(n)}(g)
=l_{\chi(v)}(g) \neq 0$. Recall that since $\chi \in \Theta _g$, $l_{\chi(d)}(g) = 0$, and hence $v \neq d$.

Finally, we must show that $u<v$. Since $\chi(1) \neq \sigma(d)$, $m_{\chi(1)}(gs) =
m_{\chi(1)}(g)$. Also, since $\chi(v) \neq \chi(d)$,  $l_{\chi(d)}(gs) = l_{\chi(d)}(g)$.
Thus, in order for $f_{\chi}(gs) = f_{\chi}(g)-2$ it must be the case
that $\max_{2 \leq i \leq d}A_{\chi(i)}(g)-\max_{2 \leq i \leq d}A_{\chi(i)}(gs)=2$.  The only way this can happen is if
$\max_{2 \leq i \leq d}A_{\chi(i)}(g)$ is realized by an expression which contains both $m_{\sigma(d)}(g)$ and $l_{\sigma(n)}(g)$, that is, both $m_{\chi(u)}(g)$ and $l_{\chi(v)}(g)$.  By the construction of the terms $A_{\chi(i)}(g)$ we must have $u<v$ for this to occur, for if $u>v$ and $l_{\chi(v)}(g)$ was a term in the expression which realized $\max_{2 \leq i \leq
d}A_{\chi(i)}(g)$, this expression would also contain $l_{\chi(u)}(g)$, not $m_{\chi(u)}(g)$ as required. Thus $u<v$.

We now construct $\tau \in \Theta'_{gs}$ which satisfies $f_{\tau}(gs) \geq f_{\tau}(g)-1$.  Let $u$ and $v$ be as above,
and define
\begin{itemize}
\item For $i<u$ let $\tau(i) = \chi(i)$. \item For $u \leq i < v$ let $\tau(i) = \chi(i+1)$. \item For $i=v$ let
    $\tau(v) = \chi(u)$. \item For $i>v$ let $\tau(i) = \chi(i)$.
\end{itemize}

We first show that $\tau \in \Theta'_{gs}$. Since $\chi(1) = \tau(1)$ and $\chi(d) = \tau(d)$ we claim that for any
$i$ with $2 \leq i \leq d$ we have $A_{\tau(i)}(gs) \leq \max_{2 \leq j \leq d} A_{\chi(j)}(gs)$.  This is clearly true when
$i=d$, since $A_{\tau(d)}(gs) = A_{\chi(d)}(gs)$.  We consider four remaining cases, and abuse our notation by writing
$m_{\chi(i)}$, $l_{\chi(i)}$, and $A_{\chi(i)}$ instead of $m_{\chi(i)}(gs)$, $l_{\chi(i)}(gs)$, and $A_{\chi(i)}(gs)$,
respectively.

\begin{enumerate}
\item Let $i<u$.  Then
\begin{align*} A_{\chi(i)} &= m_{\chi(2)} + \cdots + m_{\chi(i)} + l_{\chi(i)} + \cdots + l_{\chi(u)} + \cdots +
l_{\chi(v)} + \cdots + l_{\chi(d-1)} \\
& \text{simply rearranging the terms yields}\\
 &= m_{\chi(2)} + \cdots + m_{\chi(i)} + l_{\chi(i)} + \cdots + l_{\chi(u-1)} + l_{\chi(u+1)} +  \cdots +  l_{\chi(v)} +
 l_{\chi(u)} + l_{\chi(v+1)} +  \cdots \\
 & \ \ \ + l_{\chi(d-1)} \\
 & \text{changing to the equivalent indices for } \tau \text{ yields}\\
  &=m_{\tau(2)} + \cdots + m_{\tau(i)} + l_{\tau(i)} + \cdots + l_{\tau(d-1)} \\
  &= A_{\tau(i)}.
\end{align*}

\medskip

\item If $u \leq i <v $ then
\begin{align*} A_{\tau(i)} &= m_{\tau(2)} + \cdots + m_{\tau(u)} + \cdots + m_{\tau(i)} + l_{\tau(i)} + \cdots +
l_{\tau(d-1)} \\
&= m_{\chi(2)} + \cdots + m_{\chi(u-1)} + m_{\chi(u+1)} + \cdots + m_{\chi(i+1)} + l_{\chi(i+1)} + \cdots + l_{\chi(v)}
+ l_{\chi(u)}  \\ & \ \ \ + l_{\chi(v+1)} + \cdots + l_{\chi(d-1)} \\ & \text{adding in the "missing" term
}m_{\chi(u)}=m_{\sigma(d)}>0 \text{ and omitting }l_{\chi(u)}=0 \text{ yields the inequality}\\
 &< m_{\chi(2)} + \cdots + + m_{\chi(u-1)} + m_{\chi(u)} +  m_{\chi(u+1)} + \cdots + m_{\chi(i+1)} + l_{\chi(i+1)} +
 \cdots \\
 & \ \ \ + l_{\chi(v)} + l_{\chi(v+1)} + \cdots + l_{\chi(d-1)} \\
 &= A_{\chi(i+1)}.
\end{align*}

\medskip

\item If $i=v$ then recall that $\tau(v) = \chi(u)=\sigma(d)$.  Note that $\tau(v-1) = \chi(v)$ by the definition of
    $\tau$.  Then
\begin{align*}
A_{\tau(v)} &= m_{\tau(2)} + \cdots + m_{\tau(u-1)} + m_{\tau(u)} + \cdots + m_{\tau(v-1)} + m_{\tau(v)} + l_{\tau(v)} +
\cdots + l_{\tau(d-1)} \\
 &= m_{\chi(2)} + \cdots + m_{\chi(u-1)}+ m_{\chi(u+1)}+ \cdots + m_{\chi(v)} + m_{\chi(u)} + l_{\chi(u)} +
 l_{\chi(v+1)} + \cdots + l_{\chi(d-1)} \\
 & \text{rearranging the existing terms, omitting }l_{\chi(u)}=0 \text{  and adding in the term } \\
  & l_{\chi(v)} \text{ yields the inequality}\\
 &\leq m_{\chi(2)} + \cdots + m_{\chi(u-1)} + m_{\chi(u)} + m_{\chi(v)} + l_{\chi(v)} + \cdots + l_{\chi(d-1)} =
 A_{\chi(v)}.
\end{align*}
\medskip

\item Let $i > v$.  Then
\begin{align*} A_{\chi(i)} &= m_{\chi(2)} + \cdots + m_{\chi(u)} + \cdots + m_{\chi(v)} + \cdots + m_{\chi(i)} +
l_{\chi(i)} + \cdots + l_{\chi(d-1)} \\
& \text{rearranging the existing terms yields }\\
 &= m_{\chi(2)} + \cdots + m_{\chi(u-1)} + m_{\chi(u+1)} + \cdots  + m_{\chi(v)} + m_{\chi(u)} + m_{\chi(v+1)} + \cdots
 + m_{\chi(i)} \\
 & \ \ \ + l_{\chi(i)} + \cdots + l_{\chi(d-1)} \\
  &= A_{\tau(i)}.
\end{align*}

\end{enumerate}

Combining these cases we see that for all $2 \leq i \leq d$ we have $A_{\tau(i)}(gs) \leq \max_{2 \leq j \leq d}
A_{\chi(j)}(gs)$.  Thus
$f_{\tau}(gs) \leq f_{\chi}(gs) = f(gs)$ and hence $f(gs) = f_{\tau}(gs)$, that is, $\tau \in \Theta_{gs}$. Now since $\chi \in \Theta _{gs}'$, this implies that $l_{\chi(1)}(gs) \neq 0$. But $\tau(1)=\chi(1)$, so $l_{\tau(1)}(gs) \neq 0$ as well, and hence $\tau \in \Theta_{gs}'$.

Finally, it remains to show  that $f_{\tau}(gs) = f_{\tau}(g)-1$. Recall from the definition of $\tau$ that $\tau(v) =
\chi(u) = \sigma(d)$ and $\tau(v-1) = \chi(v) = \sigma(n)$.  Additionally, recall from the choice of $s$ that
\begin{enumerate}
\item $(m_{\sigma(d)}(gs),l_{\sigma(d)}(gs)) = (m_{\sigma(d)}(g)-1,l_{\sigma(d)}(g))$, \item
    $(m_{\sigma(n)}(gs),l_{\sigma(n)}(gs)) = (m_{\sigma(n)}(g),l_{\sigma(n)}(g)-1)$, \item
    $(m_{\sigma(i)}(gs),l_{\sigma(i)}(gs)) = (m_{\sigma(i)}(g),l_{\sigma(i)}(g))$ for $i \neq n,d$.
\end{enumerate}
Comparing $A_{\tau(i)}(gs)$ and $A_{\tau(i)}(g)$ for all possible values of $i$ shows that for all $i$, $A_{\tau(i)}(gs) =
A_{\tau(i)}(g)-1$.

From the definition of $\tau$ we see that
$$m_{\tau(1)}(g) = m_{\chi(1)}(g) = m_{\chi(1)}(gs) = m_{\tau(1)}(gs)$$
and
$$l_{\tau(d)}(g) = l_{\chi(d)}(g) = l_{\chi(d)}(gs) = l_{\tau(d)}(gs).$$
Thus $f_{\tau}(gs) = f_{\tau}(g)-1$, which concludes the proof of the Claim, and hence Proposition \ref{prop:decrease}.
\end{proof}

\section{Comparing word length in $\GG$ and distance in the product of trees}

As the Diestel-Leader graph $DL_d(q)$ is a subset of the product of $d$ trees of valence $q+1$, it is natural to compare the word metric on the Cayley graph $DL_d(q)$ to the product metric on the product of trees.  This product metric assigns every edge length one, and simply counts edges in each tree between the coordinates corresponding to two different group elements.  It is a straightforward consequence of the word length formula that these two metrics are quasi-isometric. In Corollary \ref{cor:treedist} below we extend the word length function $f$ to compute the the distance in the word metric (with respect to the generating set $S_d(q)$) between arbitrary group elements.  We conclude with a corollary which constructs a family of quasi-geodesic paths from the vertex corresponding to the identity to that corresponding to any group element.

\begin{theorem}\label{thm:treedist}
Let $l(g)$ denote the word length of $g \in \GG$ with respect to the generating set $S_{d,q}$ and $d_T(g)$ the distance in the product metric on the product of trees between $g$ in $DL_d(q)$ and $\epsilon$, the fixed basepoint corresponding to the identity in $\GG$.  Then
$$\frac{1}{2} d_T(g) \leq l(g) \leq 2 d_T(g)$$
that is, the word length is quasi-isometric to the distance from the identity in the product metric on the product of trees.
\end{theorem}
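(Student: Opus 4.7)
The plan is to use the word length formula $l(g) = f(g)$ established in the previous section, and to rewrite $f_{\sigma}(g,i)$ in a form in which the comparison with tree distances becomes transparent. First, recall that in tree $T_i$ the shortest path from $o_i$ to $x_i$ has length $m_i(g) + l_i(g)$, so that $d_T(g) = \sum_{i=1}^d (m_i(g) + l_i(g))$. Since the Diestel-Leader constraint forces $\sum_i (l_i(g) - m_i(g)) = 0$, we have the convenient identity $\sum_i m_i(g) = \sum_i l_i(g) = \tfrac{1}{2} d_T(g)$, which I will use repeatedly.

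For the upper bound, fix any $\sigma \in \Sigma_d$. Unwinding the definition of $A_{\sigma(i)}(g)$ and collecting the terms $m_{\sigma(1)}(g)$ and $l_{\sigma(d)}(g)$ into the sums gives the clean formula
\begin{equation*}
f_{\sigma}(g,i) = \sum_{j=1}^i m_{\sigma(j)}(g) + \sum_{k=i}^d l_{\sigma(k)}(g) \qquad \text{for } 2 \leq i \leq d-1,
\end{equation*}
while $f_{\sigma}(g,d) = m_{\sigma(1)}(g) + l_{\sigma(d)}(g) + \sum_{j=1}^d m_{\sigma(j)}(g)$. For $2 \leq i \leq d-1$ the first expression is visibly at most $\sum_j m_j(g) + \sum_k l_k(g) = d_T(g)$. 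For $i=d$, each of the three summands $m_{\sigma(1)}(g)$, $l_{\sigma(d)}(g)$, and $\sum_j m_{\sigma(j)}(g)$ is bounded above by $\sum_j m_j(g) = \tfrac{1}{2} d_T(g)$, so $f_{\sigma}(g,d) \leq \tfrac{3}{2} d_T(g)$. Taking the maximum over $i$ and then the minimum over $\sigma$ gives $l(g) = f(g) \leq \tfrac{3}{2} d_T(g) \leq 2 d_T(g)$.

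For the lower bound, I simply use that $f_{\sigma}(g) \geq f_{\sigma}(g,d)$ for every $\sigma$, and
\begin{equation*}
f_{\sigma}(g,d) = m_{\sigma(1)}(g) + l_{\sigma(d)}(g) + \sum_{j=1}^d m_{\sigma(j)}(g) \geq \sum_{j=1}^d m_j(g) = \tfrac{1}{2} d_T(g).
\end{equation*}
Since this holds for every $\sigma$, we obtain $l(g) = f(g) = \min_{\sigma} f_{\sigma}(g) \geq \tfrac{1}{2} d_T(g)$, completing the proof.

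There is really no obstacle here once the word length formula of the previous section is in hand: the heavy lifting was done by Propositions \ref{prop2} and \ref{prop:decrease}. The only mildly subtle point is the index $i=d$, where the term $m_{\sigma(1)}(g)$ appears twice in $f_{\sigma}(g,d)$ (once as itself, once inside $A_{\sigma(d)}(g)$), which is what prevents the constant from being $1$ rather than $\tfrac{3}{2}$; this is nevertheless comfortably absorbed into the factor of $2$ in the statement.
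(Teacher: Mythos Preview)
Your proof is correct and follows essentially the same approach as the paper's: both use the word length formula $l(g)=f(g)$, the identity $\sum_i m_i(g)=\sum_i l_i(g)=\tfrac{1}{2}d_T(g)$, and bound $f_\sigma(g,i)$ directly in terms of these sums, with the lower bound coming from $f_\sigma(g)\geq f_\sigma(g,d)\geq A_{\sigma(d)}(g)=\sum_j m_j(g)$. The only difference is that by splitting the upper bound into the cases $2\leq i\leq d-1$ and $i=d$ you obtain the slightly sharper constant $\tfrac{3}{2}$ in place of the paper's $2$, but this is a refinement rather than a different method.
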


\begin{proof}
Let $\Pi(g) = \mlq$.  It follows that $d_T(g) = \sum_{i=1}^d m_i + l_i = 2 \sum_{i=1}^d m_i$.  Using the word length formula from Section \ref{sec:wordlength}, we see that for some $\sigma \in \Sigma(d)$,
$$l(g)=f_{\sigma}(g)= \left(m_{\sigma(1)} + l_{\sigma(d)}\right) + \max_{2 \leq i \leq d} A_{\sigma(i)} \leq \left(\sum_{i=1}^d m_i + \sum_{i=1}^d l_i \right) + \sum_{i=1}^d m_i + \sum_{i=1}^d l_i = 2 d_T(g).$$
To obtain a lower bound, note that

\begin{align*}
l(g)= \min _{\sigma \in \Sigma_d} f_{\sigma}(g) &= \min_{\sigma \in \Sigma_d} (m_{\sigma(1)} + l_{\sigma(d)}+ \max _{2 \leq i \leq d} A_{\sigma(i)}(g)) \\
&\geq \min_{\sigma \in \Sigma_d} ( \max _{2 \leq i \leq d} A_{\sigma(i)}(g))
\end{align*}

But for every $\sigma\in \Sigma_d$, $\max _{2 \leq i\leq d} A_{\sigma(i)}(g) \geq A_{\sigma(d)}(g)= \sum_{i=1}^d m_i$, so

$$l(g) \geq \sum_{i=1}^d m_i = \frac{1}{2} d_T(g).$$
Combining these inequalities proves the theorem.
\end{proof}

The first corollary to Theorem \ref{thm:treedist} requires us to extend the techniques of Section \ref{sec:wordlength} in order to compute the distance in the word metric between arbitrary group elements.

\begin{corollary}\label{cor:treedist}
Let $g,h \in \GG$ and let $d_T(g,h)$ denote the distance between the two vertices in $DL_d(q)$ corresponding to $g$ and $h$ with respect to the product metric on the product of trees.  Then
$$\frac{1}{2} d_T(g,h) \leq l(g^{-1}h) \leq 2 d_T(g,h).$$
\end{corollary}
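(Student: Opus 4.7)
The plan is to reduce Corollary \ref{cor:treedist} to Theorem \ref{thm:treedist} by exploiting that both metrics in question are left-invariant under the $\GG$-action on $DL_d(q)$. Left-invariance of the Cayley graph metric is automatic, and it yields $l(g^{-1}h) = d_{DL}(\epsilon, g^{-1}h) = d_{DL}(g,h)$. For the product-of-trees metric, one uses that each element of $\GG$ acts on each factor tree $T_i$ by a tree automorphism (by the matrix-group construction recalled in Section \ref{sec:DLgraphs}), so left multiplication by $g^{-1}$ preserves $d_{T_i}$ coordinatewise and hence preserves the sum $d_T$. Applied to the pair $(g,h)$ this gives $d_T(g,h) = d_T(\epsilon, g^{-1}h) = d_T(g^{-1}h)$ in the notation of the theorem.

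Once these two identities are in place, I would simply apply Theorem \ref{thm:treedist} to the element $g^{-1}h \in \GG$, obtaining
\[
\tfrac{1}{2}\, d_T(g^{-1}h) \;\leq\; l(g^{-1}h) \;\leq\; 2\, d_T(g^{-1}h),
\]
and then substitute $d_T(g^{-1}h) = d_T(g,h)$ on both ends to conclude.

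The only real content is the $\GG$-invariance of $d_T$, which is where the "extension of the techniques of Section \ref{sec:wordlength}" alluded to by the author enters. Written out more combinatorially, one defines a relative projection $\Pi(g,h) \in (\Z^2)^d$ whose $i$-th component records the distances from $g_i$ and $h_i$ to their greatest common ancestor $g_i \curlywedge h_i$ in $T_i$ (suitably normalized against the height constraint so that the sum vanishes), verifies that $\Pi(g,h) = \Pi(g^{-1}h)$, and then reruns the word length formula $f$ on this data. This bookkeeping is essentially the content of the invariance argument above, and constitutes the main (if modest) obstacle in the proof; the inequalities themselves then follow verbatim from the proof of Theorem \ref{thm:treedist}, since they only used the upper bound $l(g) \leq \sum_i (m_i + l_i) + \sum_i m_i = 2 d_T(g)$ and the lower bound $l(g) \geq A_{\sigma(d)}(g) = \sum_i m_i = \tfrac{1}{2} d_T(g)$, both of which are manifestly invariant under the relabeling $o \leftrightarrow g$.
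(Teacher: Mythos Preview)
Your proposal is correct and matches the paper's approach in substance. The paper's proof defines the relative projection $\Pi_h(g)$ (what you call $\Pi(g,h)$), observes that the arguments of Section~\ref{sec:wordlength} are basepoint-independent so that the associated $f_h(g)$ computes $l(g^{-1}h)$, and then invokes Theorem~\ref{thm:treedist}; this is precisely the combinatorial reformulation you spell out in your final paragraph, and your invariance phrasing via the coordinatewise tree-automorphism action of $\GG$ is the conceptual reason that basepoint shift works.
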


\begin{proof}
In Section \ref{sec:wordlength} we show that $l(g) = f(g)$ for the function $f$ defined there.  The calculation of the value of $f(g)$ depends only on the coordinates of $\Pi(g)=((m_1(g),l_1(g)), \ldots, (m_d(g),l_d(g)))$. Recall that if $g$ corresponds to the vertex $(g_1, \ldots, g_d)$ in $DL_d(q)$,then for $1 \leq i \leq d$, $$(m_i(g),l_i(g))= (d_{T_i}(o_i,o_i \curlywedge g_i),d_{T_i}(g_i,o_i \curlywedge g_i)),$$ where $(o_1, \ldots, o_d)$ is the vertex in $DL_d(q)$ corresponding to the identity element of $\GG$. Define an analogous relative projection function $\Pi_h(g)= ((m_{h,1}(g), l_{h,1}(g)), \ldots, (m_{h,d}(g), l_{h,d}(g)))$, where for $1 \leq i \leq d$, $$(m_{h,i}(g), l_{h,i}(g))=(d_{T_i}(h_i,h_i \curlywedge g_i),d_{T_i}(g_i,h_i \curlywedge g_i)).$$ Now define $f_h(g)$ as in Section \ref{sec:wordlength}, replacing $\Pi(g)$ with  $\Pi_h(g)$.   Since the proof that $l(g) = f(g)$ is strictly combinatorial, the arguments in Section \ref{sec:wordlength} then imply that $f_h(g)$ computes the word length of $g^{-1}h$ with respect to the generating set $S_{d,q}$, and the Corollary follows directly from Theorem \ref{thm:treedist}.
\end{proof}

The component of the word length function which computes the maximum of the quantities $A_{\sigma(i)}$ over $\sigma \in \Sigma_d$ presents a combinatorial obstruction to writing down a family of geodesic paths representing elements of $\GG$.
The symmetry present in the Diestel-Leader graphs gives rise to a natural family of paths described by edge labels, with the property that any path with these edge labels is a quasi-geodesic path in the Cayley graph $DL_d(q)$.  While it is often not difficult to write down a family of quasi-geodesic paths in a Cayley graph, the paths we describe are very natural paths to traverse and the construction is valid when the trees are permuted, capturing the symmetry of the Diestel-Leader graphs.

Let $g \in \GG$ have projection $\Pi(g) = \mlq$.  Consider the sequence of edge labels
$$(\ed-\eone)^{m_1} (\ed-\etwo)^{m_2} \cdots (\ed-\edd)^{m_{d-1}}(\eone-\ed)^{l_1} (\etwo-\ed)^{l_2} \cdots (\edd-\ed)^{l_{d-1}}(\eone-\ed)^{\alpha}(\ed-\eone)^{l_d}$$
where $\alpha = m_d+(m_1 + \cdots + m_{d-1})-(l_1 + \cdots + l_{d-1}) = m_d + (\sum_{i=1}^d m_i - m_d) -  (\sum_{i=1}^d m_i - l_d) = l_d$.  We claim there is such a path $\zeta_g$ from the basepoint $o$ to the point identified with $g$ in $DL_d(q)$; in general, there are many possible choices of path with the above edge labels.  Moreover, this construction holds under permutation of the trees $T_1,T_2, \cdots T_d$.

%

\begin{corollary}\label{cor:quasigeo}
Let $g \in \GG$ and $\zeta_g$ any path from $\epsilon$ to $\gamma$ with edge labels as listed above.  The $\zeta_g$ is a quasi-geodesic path.
\end{corollary}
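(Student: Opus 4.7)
The plan is to bound $|\zeta_g|$, the number of edges along $\zeta_g$, above by a small constant multiple of $l(g)$. Because $\zeta_g$ is a path in the Cayley graph $\Gamma(\GG,S_{d,q})$ from $\epsilon$ to $g$, it corresponds to a word of length $|\zeta_g|$ in $S_{d,q}$ representing $g$, so the lower bound $l(g) \leq |\zeta_g|$ is automatic. Once an upper bound of the form $|\zeta_g| \leq K\, l(g)$ is established, it follows immediately that $\zeta_g$ is a $(K,0)$-quasi-geodesic from $\epsilon$ to $g$.

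First I would carry out a direct edge count. Summing the exponents of the prescribed label sequence and using the identity $\alpha = l_d$ derived just before the statement of the corollary yields
$$|\zeta_g| = \sum_{i=1}^{d-1} m_i + \sum_{i=1}^{d-1} l_i + \alpha + l_d = \sum_{i=1}^{d-1} m_i + \sum_{i=1}^{d-1} l_i + 2\,l_d.$$
Next I would invoke the Diestel-Leader height balance $\sum_{i=1}^{d} m_i = \sum_{i=1}^{d} l_i$, which holds because $\sum_{i=1}^d h_i(x_i) = 0$ on the vertex set of $DL_d(q)$. This allows the preceding expression to be rewritten as
$$|\zeta_g| = 2\sum_{i=1}^{d} m_i + (l_d - m_d) \leq 3\sum_{i=1}^{d} m_i = \tfrac{3}{2}\, d_T(g),$$
where the inequality uses $l_d \leq \sum_i l_i = \sum_i m_i$ and $d_T(g) = 2\sum_i m_i$ is the product-tree distance from $\epsilon$ to $g$ (as computed in the proof of Theorem \ref{thm:treedist}).

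Finally I would appeal to Theorem \ref{thm:treedist}, which gives $d_T(g) \leq 2\,l(g)$, and hence $|\zeta_g| \leq 3\, l(g)$. Combined with the automatic bound $l(g) \leq |\zeta_g|$, this shows that any realization of the prescribed edge-label sequence is a $(3,0)$-quasi-geodesic from $\epsilon$ to $g$, which is the desired conclusion. There is no substantive obstacle in this argument; the only point worth flagging is that all of the ``wasted'' length in $\zeta_g$ beyond a true geodesic is concentrated in the final two blocks of edges, of total length $2\,l_d$, and this waste is automatically controlled because $l_d \leq \sum_i m_i \leq l(g)$.
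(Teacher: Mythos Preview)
Your computation that $|\zeta_g| \leq 3\,l(g)$ is correct, but the inference you draw from it is not. A $(K,C)$-quasi-geodesic is a path $\zeta:[0,L]\to DL_d(q)$ satisfying
\[
\tfrac{1}{K}\,|t-s| - C \;\leq\; d\bigl(\zeta(s),\zeta(t)\bigr) \;\leq\; K\,|t-s| + C
\quad\text{for all } 0\leq s\leq t\leq L,
\]
and the nontrivial direction is the lower bound for \emph{every} pair of parameters, not just for the endpoints $s=0$, $t=L$. Your bound $l(g)\leq |\zeta_g|\leq 3\,l(g)$ controls only the endpoint pair. In general a path whose total length is within a factor $K$ of the distance between its endpoints need not be a quasi-geodesic at all: it could, for instance, backtrack through the basepoint, giving two distinct parameters at distance zero in the space but far apart along the path. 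Nothing in your argument rules this out for intermediate pairs on $\zeta_g$.

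The paper's proof addresses exactly this point: it checks that for any two points $h_1,h_2$ along $\zeta_g$, the length of the subpath of $\zeta_g$ between them is coarsely the product-tree distance $d_T(h_1,h_2)$, and then invokes Corollary~\ref{cor:treedist} (the two-variable version of Theorem~\ref{thm:treedist}) to pass from $d_T(h_1,h_2)$ to the word-metric distance $l(h_1^{-1}h_2)$. Your length estimate is the special case $h_1=\epsilon$, $h_2=g$ of this, and the missing work is to carry out the analogous comparison for an arbitrary subpath. This is not hard---one tracks how each block of edges of a fixed type moves the tree coordinates, and observes that the subpath length is at most a constant times the sum of the coordinate displacements---but it does need to be said.
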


\begin{proof}
The corollary follows from combining Theorem \ref{thm:treedist} and Corollary \ref{cor:treedist} and checking that for any two points $h_1$ and $h_2$ along $\zeta_g$ the distance between them along the path $\zeta_g$ is coarsely equivalent to the distance between them in the product metric on the product of trees.
\end{proof}

\section{Dead end elements}

An element in a group $G$ with finite generating set $S$ which corresponds to a vertex $x \in \Gamma(G,S)$ is a {\em dead
end element} if no geodesic ray in $\Gamma(G,S)$ from can be extended past $x$ and remain geodesic. Intuitively, the {\em
depth} of the dead end element $g$ is the length of the shortest path in $\Gamma(G,S)$ from $g$ to any point in the complement of the ball of
radius $l(g)$. Both the existence of dead end elements and their depth are dependent on generating set; in \cite{RW} an
example is given of a finitely generated group which has dead end elements of finite depth with respect to one generating
set, and unbounded depth with respect to another. Theorem \ref{thm:deadend} below generalizes the main result of
\cite{CR1,CR2}, namely that $\Gamma_3(2)$ has dead end elements of arbitrary depth with respect to a generating set similar
to $S_{3,2}$.

\begin{definition}
An element $g$ in a finitely generated group $G$ is a {\em dead end element with respect to a finite generating set $S$} for
$G$ if $l(g)=n$ and $l(gs) \leq n$ for all generators $s$
 in $S \cup S^{-1}$, where $l(g)$ denotes the word length of $g \in G$ with respect to $S$.
\end{definition}

\begin{definition}
A dead end element $g$ in a finitely generated group $G$ with respect to a finite generating set $S$ has {\em depth $k$} if
$k$ is the largest integer with the following property.  If the word length of $g$ is $n$, then $l(g s_1 s_2 \ldots s_r)
\leq n$ for $1 \leq r < k$ and all choices of generators $s_i \in S \cup S^{-1}$.
\end{definition}

The goal of this section is to prove the following theorem.

\begin{theorem}\label{thm:deadend}
The group $\GG$ has dead end elements of arbitrary depth with respect to the generating set $S_{d,q}$.
\end{theorem}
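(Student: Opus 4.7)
The plan is to construct, for each positive integer $N$, an explicit element $g_N \in \GG$ whose dead-end depth with respect to $S_{d,q}$ is at least $N$. I would take $g_N$ to be the element with $\Pi(g_N) = ((N,N),(N,N),\ldots,(N,N))$; this lies in $V_d(q)$ since the heights sum to zero. A direct computation from Definition \ref{thm:wordlength} gives $A_{\sigma(d)}(g_N) = dN$ and $A_{\sigma(i)}(g_N) = (d-1)N$ for $2 \leq i \leq d-1$, regardless of $\sigma$, so $f_\sigma(g_N) = N + N + dN = (d+2)N$ for every permutation $\sigma$. Hence $l(g_N) = f(g_N) = (d+2)N$ by the word length formula of Section \ref{sec:wordlength}.

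The bulk of the work is to show that every element $g' = g_N s_1 \cdots s_r$ with $r < N$ satisfies $f(g') \leq (d+2)N$, since this exhibits dead-end depth at least $N$ directly from the definition. The key structural lemma I would establish by induction on $r$ is that for $r \leq N$, $\Pi(g')$ has the form $((N, l_1'), \ldots, (N, l_d'))$ with each $l_i' \geq N - r$ and $\sum_i l_i' = dN$. The reason is that a generator of type $\ei - \ej$ can change $m_i$ only when $l_i = 0$ immediately before the multiplication; starting from $l_i = N$ and dropping by at most one per step, no $l_i$ can reach zero in fewer than $N$ steps, so all $m$-coordinates remain frozen at $N$, and the invariant $\sum_i m_i = \sum_i l_i$ then forces $\sum_i l_i' = dN$.

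Given this structure, I would bound $f(g')$ by taking $\sigma$ to be the permutation that orders the indices of $g'$ by decreasing $l$-value. Writing $l^{(1)} \geq \cdots \geq l^{(d)}$ for the sorted values, one has $l^{(d)} \leq N$ since the average is $N$. The elementary observation that the top $i-1$ values of a multiset averaging $N$ sum to at least $(i-1)N$ yields
\begin{equation*}
\sum_{k=i}^{d-1} l^{(k)} \;=\; dN \;-\; \sum_{k=1}^{i-1} l^{(k)} \;-\; l^{(d)} \;\leq\; (d-i+1)N - l^{(d)},
\end{equation*}
so for each $2 \leq i \leq d-1$ we get $A_{\sigma(i)}(g') \leq (i-1)N + (d-i+1)N - l^{(d)} = (d+1)N - l^{(d)}$. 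This also dominates $A_{\sigma(d)}(g') = dN$ because $l^{(d)} \leq N$, and therefore
\begin{equation*}
f_\sigma(g') \;\leq\; N + l^{(d)} + \bigl((d+1)N - l^{(d)}\bigr) \;=\; (d+2)N.
\end{equation*}
Hence $f(g') \leq (d+2)N = f(g_N)$ as required, and letting $N$ tend to infinity produces dead-end elements of arbitrary depth.

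The main conceptual obstacle is the frozen-coordinate lemma: once one realizes that the $m$-values cannot change until an $l$-value hits zero, the dead-end computation reduces to a clean sorted-permutation argument whose slack matches $(d+2)N$ exactly, with no room to spare. Verifying the precise action of each type of generator on $(m_i, l_i)$ pairs is routine but must be done carefully to justify the induction.
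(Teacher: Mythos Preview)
Your proposal is correct and follows essentially the paper's proof: the same dead-end candidates $g_N$ with $\Pi(g_N)=((N,N),\ldots,(N,N))$, the same sorted-$l$ permutation bound (this is exactly the content of the paper's Lemma~\ref{lemma:Hn}, though the paper packages it via a set $H_n$ allowing the $m_i$ to vary while your frozen-coordinate observation pins them at $N$), and the same conclusion that escape requires at least $N$ steps. One small arithmetic slip to fix: $(i-1)N+(d-i+1)N=dN$, not $(d+1)N$, so for $2\le i\le d-1$ you actually get $A_{\sigma(i)}(g')\le dN-l^{(d)}\le dN=A_{\sigma(d)}(g')$, whence $f_\sigma(g')\le N+l^{(d)}+dN\le(d+2)N$ using $l^{(d)}\le N$; the final bound is unaffected.
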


The outline of the proof of Theorem \ref{thm:deadend} mimics the outline of the proof in \cite{CR1,CR2} showing that
$\Gamma_3(2)$ has dead end elements of infinite depth with respect to a generating set similar to $S_{3,2}$.  However, the
details of the proofs are quite different.  In \cite{CR1,CR2} the lamplighter model of an element of $\GG$ is used to compute word length
and analogous lemmas to those below.  This model extends the well-known lamplighter model of an element in $L_n = \Z_n \wr
\Z$ (due to Jim Cannon) in which a group element of $L_n$ is visualized using  a bi-infinite string of multi-state light bulbs placed at integer points along
a number line along with a ``lamplighter."  Then $g \in L_n$ corresponds to a finite collection of illuminated bulbs and an integral position of the lamplighter.  However, in $\Gamma_3(2)$
the ``lampstand" (analogous to $\Z$ for $L_n$) consists of three bi-infinite rays, and the illuminated bulbs are obtained
using a series of relations derived from Pascal's triangle modulo 2, and the ``lamplighter" moves over a $\Z \times \Z$
grid.  A precise extension of this model to describe elements of $\GG$ for $d >3$ seems ambiguous.  The proofs given below
rely instead on the geometry of the Diestel-Leader graphs, and their inherent symmetry.

Begin by defining, for any $n \in \Z^+$, the set $$H_n = \{g \in \GG  ~ | ~\Pi(g) = \mlg $$ $$\text{ with } 0 \leq m_i(g) \leq n
\text{ and } 0 \leq l_i(g) \leq m_i(g)+n \text{ for all } 1 \leq i \leq d \}.$$  In the two lemmas below, we show that the word
length of any point in $H_n$ with respect to $S_{d,q}$ is bounded, and a set of vertices in $H_n$ at maximal distance from
the identity is described.  Proofs of both lemmas follow easily from the wordlength formula proven in Section \ref{sec:wordlength}.

\begin{lemma}\label{lemma:Hn}
If $g \in H_n$ then $l(g) \leq (d+2)n$.
\end{lemma}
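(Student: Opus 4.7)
The plan is to exhibit a single permutation $\sigma \in \Sigma_d$ for which $f_\sigma(g) \leq (d+2)n$; since $l(g) = f(g) = \min_\tau f_\tau(g)$ by the results of Section~\ref{sec:wordlength}, this suffices.

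The first step is to rewrite $f_\sigma(g,i)$ in a more convenient form. Let $M = \sum_{j=1}^d m_j(g) = \sum_{j=1}^d l_j(g)$ (the equality is forced by the Diestel-Leader height condition $\sum_j (l_j(g) - m_j(g)) = 0$) and set $\eta_j = l_j(g) - m_j(g)$. Expanding Definition~\ref{thm:wordlength} and collecting terms gives, for $2 \leq i \leq d-1$,
\[
f_\sigma(g,i) \;=\; \sum_{j=1}^i m_{\sigma(j)}(g) \;+\; \sum_{k=i}^d l_{\sigma(k)}(g) \;=\; M + l_{\sigma(i)}(g) + \sum_{k=i+1}^d \eta_{\sigma(k)},
\]
while $f_\sigma(g,d) = M + m_{\sigma(1)}(g) + l_{\sigma(d)}(g)$.

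The key idea is to choose $\sigma$ so that $\eta_{\sigma(1)} \geq \eta_{\sigma(2)} \geq \cdots \geq \eta_{\sigma(d)}$. Since $\sum_k \eta_{\sigma(k)} = 0$, a short rearrangement argument (splitting on the sign of $\eta_{\sigma(i+1)}$) shows that every initial partial sum $\sum_{k=1}^i \eta_{\sigma(k)}$ is nonnegative, and hence every tail sum $\sum_{k=i+1}^d \eta_{\sigma(k)}$ is nonpositive; in particular $\eta_{\sigma(d)} \leq 0$.

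With this choice of $\sigma$, the defining inequalities $0 \leq m_j(g) \leq n$ and $0 \leq l_j(g) \leq m_j(g) + n$ of $H_n$ finish the argument. For $2 \leq i \leq d-1$ the tail sum above is nonpositive, so $f_\sigma(g,i) \leq M + l_{\sigma(i)}(g) \leq dn + 2n = (d+2)n$, using $M \leq dn$ and $l_{\sigma(i)}(g) \leq 2n$. For $i=d$, the inequality $\eta_{\sigma(d)} \leq 0$ forces $l_{\sigma(d)}(g) \leq m_{\sigma(d)}(g) \leq n$, and combined with $m_{\sigma(1)}(g) \leq n$ this gives $f_\sigma(g,d) \leq M + n + n \leq (d+2)n$. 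Taking the maximum over $i$ yields $f_\sigma(g) \leq (d+2)n$, and hence $l(g) \leq (d+2)n$. The main bookkeeping obstacle is the rewriting of $f_\sigma(g,i)$ in terms of the $\eta_{\sigma(k)}$ and $M$; once this is in place, sorting by decreasing $\eta_j$ reduces both the bulk case $2 \leq i \leq d-1$ and the endpoint case $i=d$ to immediate estimates against the two inequalities built into the definition of $H_n$.
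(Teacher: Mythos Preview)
Your proof is correct. Both your argument and the paper's exhibit a single permutation $\sigma$ with $f_\sigma(g)\le (d+2)n$, but they choose $\sigma$ differently: the paper sorts the coordinates so that the $l_{\sigma(i)}$ are nonincreasing, whereas you sort so that the height differences $\eta_{\sigma(i)}=l_{\sigma(i)}-m_{\sigma(i)}$ are nonincreasing. The paper then proves the auxiliary inequality $\sum_{i=j}^d l_{\sigma(i)}\le (d-j+1)n$ via a small case split on the index $k$ where $l_{\sigma(k)}$ first drops to at most $n$, and combines it with $\sum_{i=1}^j m_{\sigma(i)}\le jn$. Your rewriting $f_\sigma(g,i)=M+l_{\sigma(i)}+\sum_{k>i}\eta_{\sigma(k)}$ is a nice alternative: once the $\eta$'s are sorted, the tail sums are automatically nonpositive, so the bound for $2\le i\le d-1$ reduces immediately to $M+l_{\sigma(i)}\le dn+2n$, and the case $i=d$ uses only $\eta_{\sigma(d)}\le 0$. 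Your route avoids the index $k$ and the associated case analysis at the cost of the preliminary algebraic identity; the paper's route is more direct from the definitions but needs the extra combinatorial step. Both reach the same bound with comparable effort.
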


\begin{proof}
Let $g \in H_n$ with $\Pi(g) = \mlq$. Choose $\sigma \in \Sigma_d$  so that $l_{\sigma(1)} \geq l_{\sigma(2)} \geq \cdots
\geq l_{\sigma(d)}$. We claim that $m_{\sigma(1)}+ A_{\sigma(i)}(g)+l_{\sigma(d)} \leq (d+2)n$ for every $2 \leq i \leq d$,
hence $f_{\sigma}(g) \leq (d+2)n$.  It then follows from the word length formula that $l(g)  \leq f_{\sigma}(g) \leq (d+2)n$.

Choose $k$ so that $l_{\sigma(k)} > n$, but $l_{\sigma(k+1)} \leq n$, and $k=0$ if $l_{\sigma(i)} \leq n$ for every $i$.
Since $$\sum_{i=1}^d l_{\sigma(i)}(g) = \sum_{i=1}^d m_{\sigma(i)}(g) \leq dn,$$ it follows that $k < d$. Furthermore, we
claim that $l_{\sigma(i)} + \cdots l_{\sigma(d)} \leq (d-i+1)n$ for $1 \leq i \leq d$. This is clear if $i \geq k+1$, since
then each term in the sum is less than $n$. But if $1 \leq i \leq k+1$, then $l_{\sigma(1)}+ \cdots l_{\sigma(i-1)} \geq
(i-1)n$, so $l_{\sigma(i)}+ \cdots l_{\sigma(d)}\leq dn-(-i+1)n=(d-i+1)n$.

For $2 \leq j \leq d-1$,  we see that $m_{\sigma(1)}+ A_{\sigma(j)}(g)+l_{\sigma(d)}=\sum_{i=1}^j m_{\sigma(i)}+
\sum_{i=j}^{d}l_{\sigma(i)}\leq  (j)n+(d-j+1)n=(d+1)n$. But $A_\sigma(d)(g)= \sum_{i=1}^d m _{\sigma(d)} \leq dn$, and hence
$m_{\sigma(1)}+ A_{\sigma(d)}(g)+l_{\sigma(d)} \leq (d+2)n$. Thus, $m_{\sigma(1)}+ A_{\sigma(i)}(g)+l_{\sigma(d)} \leq (d+2)n$ for
every $2 \leq i \leq d$, as claimed, and the lemma follows.
\end{proof}

The next lemma follows immediately from the word length formula of Section \ref{sec:wordlength}.
\begin{lemma}\label{lemma:5n}
If $g_n \in H_n$ and $\Pi(g_n) = \left( (n,n)(n,n) \cdots (n,n) \right)$ then $l(g) = (d+2)n$.
\end{lemma}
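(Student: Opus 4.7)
The plan is a direct computation using the word length formula from Section \ref{sec:wordlength}. Since the coordinates of $\Pi(g_n)$ are totally symmetric---every $m_i(g_n)=n$ and every $l_i(g_n)=n$---the quantities $f_\sigma(g_n,i)$ will not depend on $\sigma$ at all, so the minimization over $\Sigma_d$ is trivial, and the only real work is computing the maximum over $i$.

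First I would substitute the coordinates directly into the definition of $A_{\sigma(i)}(g_n)$. For $2 \le i \le d-1$ this gives
\[
A_{\sigma(i)}(g_n) \;=\; \sum_{j=2}^{i} m_{\sigma(j)}(g_n) + \sum_{k=i}^{d-1} l_{\sigma(k)}(g_n) \;=\; (i-1)n + (d-i)n \;=\; (d-1)n,
\]
while $A_{\sigma(d)}(g_n) = \sum_{j=1}^{d} m_{\sigma(j)}(g_n) = dn$. Hence $\max_{2 \le i \le d} A_{\sigma(i)}(g_n) = dn$, achieved at $i=d$.

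Next, plugging into $f_\sigma(g_n,i) = m_{\sigma(1)}(g_n) + l_{\sigma(d)}(g_n) + A_{\sigma(i)}(g_n) = 2n + A_{\sigma(i)}(g_n)$, I would conclude $f_\sigma(g_n) = 2n + dn = (d+2)n$ for every $\sigma \in \Sigma_d$. Therefore $f(g_n) = \min_{\sigma \in \Sigma_d} f_\sigma(g_n) = (d+2)n$.

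Finally, I would invoke the result established via Lemma \ref{lemma:length} together with Propositions \ref{prop2} and \ref{prop:decrease}, which shows $l(g) = f(g)$ for every $g \in \GG$, to conclude $l(g_n) = (d+2)n$. There is no real obstacle here; the lemma is purely a sanity check that the bound of Lemma \ref{lemma:Hn} is sharp and is realized at the ``corner'' element of $H_n$ with all projection coordinates equal to $(n,n)$.
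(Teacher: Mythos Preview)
Your proof is correct and is exactly the direct computation the paper has in mind; the paper's own ``proof'' is simply the sentence that the lemma follows immediately from the word length formula of Section~\ref{sec:wordlength}, and you have supplied precisely those details.
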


The proof of Theorem \ref{thm:deadend} follows easily from Lemmas \ref{lemma:Hn} and \ref{lemma:5n}.

{\it Proof of Theorem \ref{thm:deadend}.} Let $g_n \in H_n$ be any element with $\Pi(g_n) = ((n,n)(n,n) \cdots (n,n))$.  In
Lemma \ref{lemma:5n} it is shown that $l(g)=(d+2)n$.  It follows immediately from Lemma \ref{lemma:Hn} that $g_n$ is a dead
end element, as all vertices adjacent to $g_n$ lie in $H_n$.

To see that the depth of $g_n$ is at least $n$, note that the length of a path from $g_n$ to a point outside $H_n$ must
contain a subpath of at least $n$ edges.  Thus the depth of $g_n$ is at least $n$ and we conclude that $\GG$ has dead end
elements of arbitrary depth with respect to the generating set $S_{d,q}$. \qed

\section{Cone types and geodesic languages}
We now prove that $\GG$ has no regular language of geodesics with respect to the generating set $S_{d,q}$, that is, there is
no collection of geodesic representatives for elements of $\GG$ which is accepted by a finite state automata.  The existence
of a regular language of geodesics for a finitely generated group $G$ is equivalent to the finiteness of the set of cone
types of $G$.  It is a well known theorem in computer science that a language is regular if and only if it has finitely many
distinct left quotients.  In the case of a geodesic language, the left quotients are exactly the cone types.  We prove that
$\GG$ has infinitely many cone types with respect the generating set $S_{d,q}$, and it follows that $\GG$ has no regular
language of geodesics with respect to $S_{d,q}$.

We begin by defining the {\em cone} and the {\em cone type} of an element $g \in G$, where $G$ is a group with finite
generating set $S$.  Cannon defined the cone type of an element $w \in G$ to be the set of geodesic
extensions of $w$ in the Cayley graph $\Gamma(G,S)$. \cite{cannoncone}

\begin{definition}
A path $p$ is {\em outbound} if $d(1,p(t))$ is a strictly increasing function of $t$. For a given $g\in G$, the {\em cone}
at $g$, denoted $C'(g)$ is the set of all outbound paths starting at $g$. Define the {\em cone type} of $g$, denoted $C(g)$,
to be $g^{-1}C'(g)$.
\end{definition}

This definition applies both in the discrete setting of the group and in the one-dimensional metric space which is the
Cayley graph. A subtlety is that if the presentation for $G$ includes odd length relators, then the cone type of an element
in the Cayley graph may include paths which end at the middle of an edge.  If the presentation for $G$ consists entirely of
even length relators, then every cone type viewed in the Cayley graph consists entirely of full edge paths.  We refer the
reader to \cite{NS} for a more detailed discussion of cone types.

\begin{theorem}\label{thm:conetypes}
The group $\GG$ has infinitely many cone types with respect to the generating set $S_{d,q}$.
\end{theorem}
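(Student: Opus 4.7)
The plan is to exhibit an infinite family $\{g_n\}_{n \ge 1} \subset \GG$ whose cone types $\{C(g_n)\}_{n \ge 1}$ are pairwise distinct. Recall that $h \in C(g)$ precisely when $l(gh) = l(g) + l(h)$, equivalently when $g$ lies on some geodesic in $DL_d(q)$ from the identity to $gh$. To distinguish cones, it suffices to find, for each $n$, a word $w_n$ in $S_{d,q}$ that labels a geodesic extension of $g_n$ but not of $g_m$ for any $m < n$; the existence of such distinguishing words immediately yields infinitely many distinct cone types.

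For the family I take $g_n$ to be the element of $\GG$ with projection $\Pi(g_n) = ((n,0),(0,0),\ldots,(0,0),(0,n))$, so that the vertex identified with $g_n$ in $DL_d(q)$ has its $T_1$-coordinate lying $n$ steps below $o_1$ along a fixed branch of $T_1$, its $T_d$-coordinate lying $n$ steps above $o_d$ along a fixed branch of $T_d$, and all other coordinates equal to the basepoints $o_i$. The word length formula of Section \ref{sec:wordlength} immediately gives $l(g_n) = n$, realized by the permutation $\sigma \in \Sigma_d$ with $\sigma(1) = d$ and $\sigma(d) = 1$. The key observation is that within the ball of radius less than $n$ about $g_n$ in the Cayley graph, one cannot ``see'' the basepoints $o_1, o_d$; the landmarks $o_1, o_d$ enter into the word length formula only for extensions that penetrate past this radius.

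For each $n$ I construct $w_n$ in two segments. The first segment consists of $n$ generators of edge type $\textbf{e}_d - \textbf{e}_1$ whose labels are chosen to continue $g_n$'s existing branches; applying this prefix to $g_n$ brings the starting vertex along the axial branches to the element with projection $((2n,0),(0,0),\ldots,(0,0),(0,2n))$ at word length $2n$, while applying the identical generators to $g_m$ for $m<n$ only reaches depth $m+n$ in $T_1$ and height $m+n$ in $T_d$. The second segment consists of carefully chosen generators whose branch labels, combined with the new branches established by the prefix at depth $2n$, coherently describe an outbound path in $DL_d(q)$; crucially, these labels are incompatible with the shallower configuration at depth $m+n$, so applying the second segment to $g_m w_n^{\text{prefix}}$ forces at least one step that fails to be outbound. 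Verification that $l(g_n w_n) = n + |w_n|$ is done by tracking $\Pi(g_n w_n)$ step-by-step and exhibiting at each stage a permutation $\sigma \in \Sigma_d$ under which the word length formula realizes the expected value $n+k$ after $k$ steps; the parallel calculation for $g_m$ shows that no permutation can realize $m + |w_n|$, giving $l(g_m w_n) < m + |w_n|$ and hence $w_n \notin C(g_m)$.

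The principal obstacle is the precise construction of the second segment so that its branch labels synchronize with the branches laid down by the prefix exactly when the prefix departed from depth $n$. This requires unpacking the correspondence between generators in $S_{d,q}$ (specific matrices with coefficients in $\mathcal{L}_q$) and branch choices in the trees $T_i$, as given by the identification of Bartholdi-Neuhauser-Woess. Once this dictionary is in hand, the verifications above reduce to identifying, for each starting vertex $g_m$, the optimal $\sigma \in \Sigma_d$ realizing $f_\sigma(g_m w_n)$ and a routine application of the word length formula of Definition \ref{thm:wordlength}; since $w_n \in C(g_n) \setminus C(g_m)$ for every $m < n$, the cone types $\{C(g_n)\}$ are pairwise distinct, and $\GG$ has infinitely many cone types with respect to $S_{d,q}$.
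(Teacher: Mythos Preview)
Your proposal has a genuine gap: the ``second segment'' of $w_n$ is never constructed, and you explicitly flag this as ``the principal obstacle.'' Without that construction and the accompanying verifications, there is no proof. More seriously, the strategy as sketched may not be executable in the way you envision. The prefix of $w_n$ consists of edges of type $\ed - \eone$; in $T_1$ this moves to the unique predecessor (no branch choice), and in $T_d$ it moves to a successor. After the prefix, both $g_n w_n^{\text{prefix}}$ and $g_m w_n^{\text{prefix}}$ have their $T_1$-coordinates on the same trunk of ancestors of $o_1$, at depths $2n$ and $m+n$ respectively. For a subsequent generator of type $\eone - \ej$ to distinguish them, it would have to branch off the trunk at depth $2n$ while returning toward $o_1$ at depth $m+n$, and you would need this (or an analogous discrepancy) to occur for \emph{every} $m<n$ simultaneously, using a single fixed word $w_n$. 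Establishing that such generators exist requires exactly the detailed unpacking of the Bartholdi--Neuhauser--Woess identification that you defer; it is not routine, and your outline gives no indication of how to carry it out.

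The paper's argument avoids this entirely. It chooses a different family $\{g_n\}$ with carefully designed projections and shows, using only the word length formula and Lemma~\ref{lemma:max-m}, that (i) there is a geodesic path of length exactly $n$ from $g_n$ to a dead end element, and (ii) no element within distance $n-1$ of $g_n$ is a dead end element. Thus the distance along outbound paths from $g_n$ to the nearest dead end element is exactly $n$, which is an invariant of the cone type; distinct values of $n$ therefore yield distinct cone types. This argument never touches the specific branch labels or the BNW dictionary---everything is computed at the level of the projection $\Pi$---which is precisely why it goes through cleanly while yours stalls.
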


The following corollary is an immediate consequence of Theorem \ref{thm:conetypes}.

\begin{corollary}\label{cor:languages}
The group $\GG$ has no regular language of geodesics with respect to the generating set $S_{d,q}$.
\end{corollary}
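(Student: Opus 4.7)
The plan is to derive the corollary directly from Theorem \ref{thm:conetypes} via the standard Myhill--Nerode correspondence between cone types and left quotients of a geodesic language, which the authors have already flagged in the preceding paragraph. The argument is essentially a one-paragraph contradiction, so I will spend most of the space being precise about what ``geodesic language'' should mean and why its left quotients are in bijection with the cone types.

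First I would set up the framework. Fix the generating set $S_{d,q}$, and suppose for contradiction that $L \subseteq (S_{d,q} \cup S_{d,q}^{-1})^*$ is a regular language consisting of geodesic representatives, containing at least one representative of every element of $\GG$. Recall from the Myhill--Nerode theorem that a language $L$ over a finite alphabet is regular if and only if the equivalence relation $u \sim_L v \iff \{w : uw \in L\} = \{w : vw \in L\}$ has finitely many equivalence classes. The set $\{w : uw \in L\}$ is the left quotient $u^{-1}L$, so regularity of $L$ forces only finitely many distinct left quotients.

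Next I would identify these left quotients with cone types. If $u,v \in L$ are prefixes of geodesics representing group elements $g_u, g_v \in \GG$, then since every word in $L$ is geodesic, the suffixes $w$ with $uw \in L$ spell outbound paths starting at $g_u$; more precisely $u^{-1}L = \{w : g_u w \in G \text{ via a geodesic extension in } L\}$. Translating by $g_u^{-1}$, this coincides with a subset of the cone type $C(g_u)$, and since $L$ contains a geodesic representative of every extension, we recover the entire cone type. Hence distinct cone types $C(g_u) \neq C(g_v)$ force distinct left quotients $u^{-1}L \neq v^{-1}L$.

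Combining the two steps, finitely many left quotients would imply finitely many cone types, contradicting Theorem \ref{thm:conetypes}. The main subtlety (and really the only thing to be careful about) is matching the author's convention for cones, which are defined in the Cayley graph rather than on the group: because the generators in $S_{d,q}$ come in inverse pairs and the Cayley graph is simple, every cone type is realized by full edge paths, so the identification of left quotients with cone types goes through cleanly and no half-edge paths need to be tracked. The corollary follows.
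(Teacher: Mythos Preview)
The paper itself offers no explicit proof beyond the sentence ``immediate consequence of Theorem \ref{thm:conetypes},'' relying on the standard equivalence (stated in the paragraph preceding the theorem) between finiteness of cone types and regularity of the geodesic language. Your write-up tries to supply the Myhill--Nerode details, and the outline is the right one, but there is a genuine gap in how you set things up.

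You take $L$ to be an arbitrary regular language of geodesics containing at least one representative of every group element---a geodesic cross-section. With that choice, the step ``since $L$ contains a geodesic representative of every extension, we recover the entire cone type'' is not justified: if $w$ is an outbound path from $g_u$, then $uw$ is a geodesic for the element $g_u\overline{w}$, but $L$ is only assumed to contain \emph{some} geodesic for $g_u\overline{w}$, and that geodesic need not pass through $g_u$ at all. Hence $u^{-1}L$ may be a proper subset of $C(g_u)$, and distinct cone types need not produce distinct left quotients of $L$. Your injection from cone types to left quotients simply does not follow.

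The fix matches what the paper actually has in mind when it writes ``in the case of a geodesic language, the left quotients are exactly the cone types'': take $L$ to be the language of \emph{all} geodesic words. Then $u^{-1}L=\{w:uw\text{ is geodesic}\}$, which is precisely $C(g_u)$ (viewed as a set of words), and Myhill--Nerode applies directly: infinitely many cone types force infinitely many left quotients, so the full geodesic language is not regular. If one really wants the stronger conclusion that no regular geodesic \emph{cross-section} exists---which the paper's wording hints at---that is a separate and stronger statement, and the bare Myhill--Nerode argument you sketch does not establish it.
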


We begin with a lemma stating sufficient but not necessary conditions on $\sigma \in \Sigma_d$ which ensure that $f(g) = f_{\sigma}(g)$; this lemma will
be extremely useful in the the proof of Theorem \ref{thm:conetypes} as realizing when $f(g)=f_{\sigma}(g)$ for a particular
$g \in \GG$ and $\sigma \in \Sigma_d$ can be quite difficult.  Recall that we identify $g \in \GG$ with the vertex $x \in
DL_d(q)$ corresponding to it, and abuse notation by writing $\Pi(g)$ for $\Pi(x)$.

\begin{lemma}\label{lemma:max-m}
Let $g \in \GG$ have projection $\Pi(g) = \mlg$.  If $\sigma \in \Sigma_d$ satisfies
\begin{enumerate}
\item $\min_{\tau \in \Sigma_d} m_{\tau(1)}(g) + l_{\tau(d)}(g) = m_{\sigma(1)}(g) + l_{\sigma(d)}(g) $, and \item
    $\max_{2 \leq i \leq d} A_{\sigma(i)}(g) = A_{\sigma(d)}(g)$
\end{enumerate}
then $f(g) = f_{\sigma}(g)$.
\end{lemma}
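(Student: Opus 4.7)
The plan is to exploit the fact that the quantity $A_{\sigma(d)}(g) = \sum_{j=1}^d m_{\sigma(j)}(g)$ does not actually depend on the permutation $\sigma$, since it is simply the sum of all the $m$-coordinates of $\Pi(g)$. This observation makes the lemma essentially immediate once the two hypotheses are unpacked.

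First, I would use hypothesis (2) to collapse the max in the definition of $f_\sigma(g)$. By (2), for the chosen $\sigma$ we have
\begin{equation*}
f_\sigma(g) = \max_{2 \leq i \leq d} f_\sigma(g,i) = m_{\sigma(1)}(g) + l_{\sigma(d)}(g) + A_{\sigma(d)}(g) = m_{\sigma(1)}(g) + l_{\sigma(d)}(g) + \sum_{j=1}^d m_j(g).
\end{equation*}
Thus $f_\sigma(g)$ is pinned down by the two boundary coordinates $\sigma(1)$ and $\sigma(d)$ together with the $\sigma$-invariant quantity $\sum_j m_j(g)$.

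Second, for an arbitrary $\tau \in \Sigma_d$, the definition of $f_\tau(g)$ as a maximum gives the trivial lower bound $f_\tau(g) \geq f_\tau(g,d)$, and since $A_{\tau(d)}(g) = \sum_j m_j(g) = A_{\sigma(d)}(g)$, this reads
\begin{equation*}
f_\tau(g) \geq m_{\tau(1)}(g) + l_{\tau(d)}(g) + \sum_{j=1}^d m_j(g).
\end{equation*}
Now hypothesis (1) says precisely that $m_{\sigma(1)}(g) + l_{\sigma(d)}(g)$ is the minimum of $m_{\tau(1)}(g) + l_{\tau(d)}(g)$ over $\tau \in \Sigma_d$, so the right-hand side is at least $f_\sigma(g)$.

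Combining these, $f_\tau(g) \geq f_\sigma(g)$ for every $\tau \in \Sigma_d$, hence $f(g) = \min_\tau f_\tau(g) = f_\sigma(g)$, as required. There is no real obstacle here — the whole argument is a one-line observation about $A_{\sigma(d)}$ being $\sigma$-independent, combined with the two hypotheses which handle exactly the two remaining terms $m_{\sigma(1)}(g) + l_{\sigma(d)}(g)$ and the max in $i$. The only subtlety to flag is why condition (2) alone need not force $\sigma$ to minimize $f_\sigma(g)$ in general: without (1), some other $\tau$ could yield a smaller boundary sum $m_{\tau(1)}(g) + l_{\tau(d)}(g)$ even if its own $A_{\tau(i)}$-maximum is not attained at $i = d$.
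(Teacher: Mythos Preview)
Your proof is correct and follows essentially the same argument as the paper: both use that $A_{\tau(d)}(g)=\sum_j m_j(g)$ is independent of $\tau$, bound $f_\tau(g)\geq f_\tau(g,d)$, and then apply hypotheses (1) and (2) to conclude $f_\tau(g)\geq f_\sigma(g)$ for all $\tau$. The only difference is presentational---you make the permutation-invariance of $A_{\sigma(d)}(g)$ explicit, whereas the paper leaves it implicit in the line $A_{\tau(d)}(g)=A_{\sigma(d)}(g)$.
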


\begin{proof}
Let $\sigma$ be as in the statement of the lemma, and $\tau$ any element of $\Sigma_d$.  It is always true that
 $\max_{2 \leq i \leq d} A_{\tau(i)}(g) \geq A_{\tau(d)}(g)$, and that $m_{\tau(1)}(g) + l_{\tau(d)}(g) \geq
 m_{\sigma(1)}(g) + l_{\sigma(d)}(g)$ by the choice of $\sigma$.

Hence
\begin{align*}f_{\tau}(g) &=m_{\tau(1)}(g) + l_{\tau(d)}(g) +  \max_{2 \leq i \leq d} A_{\tau(i)}(g)\\
  & \geq m_{\sigma(1)}(g) + l_{\sigma(d)}(g) + A_{\tau(d)}(g) \\
  & =m_{\sigma(1)}(g) + l_{\sigma(d)}(g) + A_{\sigma(d)}(g)\\
  & = f_{\sigma}(g).
  \end{align*}

Thus by the definition of $f(g)$ we must have $f(g) = f_{\sigma}(g)$.

\end{proof}

To prove Theorem \ref{thm:conetypes} we define a sequence of elements \{$g_n\}$ so that there is a geodesic path of length
$n$ from $g_n$ terminating at a dead end element, and so that no shorter geodesic path from $g_n$ reaches any other dead end
element of the group.  Thus each $g_n$ lies in a different cone type, and the theorem follows.

{\it Proof of Theorem \ref{thm:conetypes}.} Let $g_n$ for $n \in \Z^+$ be any element with projection
$$\Pi(g_n) = \left( (2n,3n),(3n,4n),(4n,5n),(5n,6n), \cdots ,((d-1)n,dn),(dn,3n),(2n,n)\right).$$
We first show that $f(g_n) = f_{\epsilon}(g_n)$ where $\epsilon$ is the identity permutation, and specifically that $f(g_n)
= 3n + \sum_{i=1}^d m_i(g_n)$.

First note that $\min_{\tau \in \Sigma_d} m_{\tau(1)}(g_n) + l_{\tau(d)}(g_n) = 3n = m_{\epsilon(1)}(g_n) +
l_{\epsilon(d)}(g_n) $.  Second, consider $A_{\epsilon(d)}(g_n) = 2n+ \sum_{j=2}^{d} jn=4n+ \sum_{j=3}^{d} jn$ and compare
this value to $A_{\epsilon(i)}(g_n)$ for $i \neq d$.  When $2 \leq i < d-1$,
\begin{align*}
A_{\epsilon(i)}(g_n) &= [m_2(g_n) + m_3(g_n) + \cdots + m_i(g_n)] + [l_i(g_n) + \cdots + l_{d-1}(g_n)]  \\
 &= [3n + 4n + \cdots (i+1)n] + [(i+2)n + \cdots + dn + 3n] \\
 &= 3n+ \sum_{j=3}^{d} jn < 4n + \sum_{j=3}^{d} jn = A_{\epsilon(d)}(g_n).
\end{align*}
When $i=d-1$ we see that $A_{d-1}(g_n) =  3n+ \sum_{j=3}^{d} jn < 4n + \sum_{j=3}^{d} jn = A_{\epsilon(d)}(g_n).$ It then
follows from Lemma \ref{lemma:max-m} that $f(g_n) = f_{\epsilon}(g_n) = 3n + \sum_{i=1}^d m_i(g_n)$.  We note for later use
that $A_{\epsilon(d)}(g_n) - A_{\epsilon(i)}(g_n) = n$ when $i \neq d$.

Let $h_n$ be any point connected to $g_n$ by a path of length at most $n$ in $DL_d(q)$.  Then $h_n$ has projection
\begin{align*} \Pi(h_n) &= ( (2n,3n-r_1),(3n,4n-r_2),(4n,5n-r_3),(5n,6n-r_4), \cdots  \\ &
((d-1)n,dn-r_{d-2}),(dn,3n-r_{d-1}),(2n,n-r_d) )
\end{align*}
where the $r_i$ satisfy:
\begin{enumerate}
\item  $\sum_{i=1}^d r_i = 0$, and \item the sum of the positive $r_i$ is at most $n$; hence the sum of the negative
    $r_i$ is at least $-n$.
\end{enumerate}

We first calculate $f(h_n)$, again using Lemma \ref{lemma:max-m}. Note that for any $\tau \in \Sigma_d$,
$$\min_{\tau \in \Sigma_d} m_{\tau(1)}(h_n) + l_{\tau(d)}(h_n) = 3n-r_d = m_{\epsilon(1)}(h_n) + l_{\epsilon(d)}(h_n) $$ and
that $2n \leq 3n-r_d \leq 4n$.  Moreover, $A_{\epsilon(d)}(h_n) = A_{\epsilon(d)}(g_n)$.  We compare $A_{\epsilon(i)}(g_n)$
and $A_{\epsilon(i)}(h_n)$ for $i \neq d$, and see that
\begin{align*}
A_{\epsilon(i)}(h_n) &= 3n + 4n + \cdots + (i+1)n + (i+2)n-r_{i} + (i+3)n-r_{i+1} + \cdots dn - r_{d-2} + 3n-r_{d-1} \\
 &=A_{\epsilon(i)}(g_n) - (r_{i} + \cdots + r_{d-1}) \leq A_{\epsilon(i)}(g_n) + n.
\end{align*}
Above we saw that $A_{\epsilon(d)}(g_n) - A_{\epsilon(i)}(g_n) =n$ for $2 \leq i<d$.  Combining this with the above
inequality yields
$$A_{\epsilon(i)}(h_n) \leq A_{\epsilon(i)}(g_n) + n = A_{\epsilon(d)}(g_n) -n +n = A_{\epsilon(d)}(g_n)  =
A_{\epsilon(d)}(h_n)$$
and hence $\max_{2 \leq i \leq d} A_{\epsilon(i)}(h_n) = A_{\epsilon(d)}(h_n)$.  Lemma \ref{lemma:max-m} then implies that
$$f(h_n) = f_{\epsilon}(h_n) = 3n-r_d + A_{\epsilon(d)}(h_n).$$

Now choose $h_n$ to be a point of the above form which is connected to $g_n$ by a path of length at most $n-1$.  We show
that $h_n$ is not a dead end element by exhibiting a generator $s$ so that $f(h_ns) = f(h_n) + 1$.  Let $s \in S_{d,q}$ be a
generator corresponding to an edge of type $\ed-\eone$ emanating from $h_n$, so that
\begin{align*} \Pi(h_ns) &= ( (2n,3n-r_1-1),(3n,4n-r_2),(4n,5n-r_3),(5n,6n-r_4), \cdots  \\ & \cdots
,((d-1)n,dn-r_{d-2}),(dn,3n-r_{d-1}),(2n,n-r_d+1) )
\end{align*}
As the ordered pairs in the projection are unchanged between $\Pi(h_n)$ and $\Pi(h_ns)$ except in the second coordinate of
the first and last ordered pairs, it is still the case that $\max_{2 \leq i \leq d} A_{\epsilon(i)}(h_ns) =
A_{\epsilon(d)}(h_ns)$. Note as well that
$$\min_{\tau \in \Sigma_d} m_{\tau(1)}(h_ns) + l_{\tau(d)}(h_ns) = 3n-r_d+1 = m_{\epsilon(1)}(h_ns) + l_{\epsilon(d)}(h_ns).
$$
The maximum value of $3n-r_d+1$ is $4n$; it may be possible to achieve a value of $4n$ using another permutation
in $\Sigma_d$, but if $3n-r_d+1 =4n$, the value of $m_{\tau(1)}(h_ns) + l_{\tau(d)}(h_ns)$ can never be less than $4n$ with
any non-identity permutation.  Thus we can achieve the minimum value of this quantity using $\epsilon$.  Lemma
\ref{lemma:max-m} now implies that $f(h_ns) = f_{\epsilon}(h_ns) = 3n-r_d+1 + A_{\epsilon(d)}(h_ns) = 3n-r_d+1
+A_{\epsilon(d)}(h_n) = f(h_n) + 1$ and thus $h_n$ is not a dead end element in $\GG$ with respect to the generating set
$S_{d,q}$.

We now show that there is a geodesic path of length $n$ from $g_n$ which terminates at a dead end element which we denote
$g_{n,n}$.  Namely, consider any path of length $n$ originating at $g_n$ with the property that the $i$-th point on the
path, denoted $g_{n,i}$, has projection
$$\Pi(g_{n,i}) =  ( (2n,3n-i),(3n,4n),(4n,5n),(5n,6n), \cdots,((d-1)n,dn),(dn,3n),(2n,n+i))$$
for $1 \leq i \leq n$.  Letting $r_1 = i$, $r_d = -i$ and $r_j = 0$ for $1 < j < d$, the above argument implies that
$$f(g_{n,i}) = f_{\epsilon}(g_{n,i}) = 3n-r_d +A_{\epsilon(d)}(g_{n,i}) = 3n-r_d + A_{\epsilon(d)}(g_{n})=f(g_n) - r_d =
f(g_n) + i$$ and hence this path is geodesic.

We now show that the endpoint $g_{n,n}$ of this path, which has projection
$$\Pi(g_{n,n}) =  \left( (2n,2n),(3n,4n),(4n,5n),(5n,6n), \cdots ,((d-1)n,dn),(dn,3n),(2n,2n)\right)$$
is a dead end element in $\GG$ with respect to the generating set $S_{d,q}$.

We know that $f(g_{n,n}) =  4n + A_{\epsilon(d)}(g_{n,n})$.  Let $s \in S_{d,q}$ be any generator so that $g_{n,n}s \neq
g_{n,n-1}$.  We must show that $f(g_{n,n}s) \leq f(g_{n,n})$.  Since $l_{i}(g_{n,n}) > 0$ for all $i$, there must be indices
$j \neq k$ so that

\begin{enumerate}
\item $(m_j(g_{n,n}s),l_j(g_{n,n}s)) = (m_j(g_{n,n}),l_j(g_{n,n})+1)$, \item $(m_k(g_{n,n}s),l_k(g_{n,n}s)) =
    (m_k(g_{n,n}),l_k(g_{n,n})-1)$, and \item $(m_r(g_{n,n}s),l_r(g_{n,n}s)) = (m_r(g_{n,n}),l_r(g_{n,n}))$ for $r \neq
    j,k$.
\end{enumerate}

Case 1: $k=d$. Using the identity permutation $\epsilon$, note that
$$\min_{\tau \in \Sigma_d} m_{\tau(1)}(g_{n,n}s) + l_{\tau(d)}(g_{n,n}s) = 4n-1 = m_{\epsilon(1)}(g_{n,n}s) +
l_{\epsilon(d)}(g_{n,n}s).$$
It may now be the case that $A_{\epsilon(i)}(g_{n,n}s) = A_{\epsilon(i)}(g_{n,n})+1$ for some $i$; however, it is always
true that for $2 \leq i \leq d-1$ we have $A_{\epsilon(i)}(g_{n,n}s)  \leq A_{\epsilon(i)}(g_{n,n})+1$.   Since
$A_{\epsilon(d)}(g_{n,n})-A_{\epsilon(i)}(g_{n,n})=n$, we see that for $2 \leq i \leq d-1$
$$A_{\epsilon(i)}(g_{n,n}s) \leq A_{\epsilon(i)}(g_{n,n})+1 \leq A_{\epsilon(i)}(g_{n,n})+n = A_{\epsilon(d)}(g_{n,n}) =
A_{\epsilon(d)}(g_{n,n}s).$$
It then follows from Lemma \ref{lemma:max-m} that $f(g_{n,n}s) = f_{\epsilon}(g_{n,n}s) = 4n-1 +A_{\epsilon(d)}(g_{n,n}s)$.
Since  $A_{\epsilon(d)}(g_{n,n}s) =A_{\epsilon(d)}(g_{n,n})$ we see that $f(g_{n,n}s) =f(g_{n,n})-1$.

Case 2: $k=1$. Replacing $\epsilon$ with the permutation $\sigma = (1 \ d) \in \Sigma_d$, the argument in Case 1 shows that
$f(g_{n,n}s) =f_{\sigma}(g_{n,n}s)=f(g_{n,n})-1$.

Case 3: $2 \leq k \leq d-1$ and $j \neq d$. First note that
$$\min_{\tau \in \Sigma_d} m_{\tau(1)}(g_{n,n}s) + l_{\tau(d)}(g_{n,n}s) = 4n = m_{\epsilon(1)}(g_{n,n}s) +
l_{\epsilon(d)}(g_{n,n}s)$$
and that $A_{\epsilon(d)}(g_{n,n}s) =A_{\epsilon(d)}(g_{n,n}).$  As in the above cases, for $2 \leq i \leq d-1$ we have
$A_{\sigma(i)}(g_{n,n}s)  \leq A_{\epsilon(i)}(g_{n,n})+1$ and the same reasoning yields $A_{\epsilon(i)}(g_{n,n}s) \leq
A_{\epsilon(d)}(g_{n,n}s).$  Together this shows that $f(g_{n,n}s) = f_{\epsilon}(g_{n,n}s) = 4n
+A_{\epsilon(d)}(g_{n,n}s)$.  Since  $A_{\epsilon(d)}(g_{n,n}s) =A_{\epsilon(d)}(g_{n,n})$ we see that $f(g_{n,n}s)
=f(g_{n,n})$.

Case 4: $2 \leq k \leq d-1$ and $j = d$.  Replacing $\epsilon$ with the permutation $\sigma = (1 \ d) \in \Sigma_d$, the
argument in Case 3 shows that $f(g_{n,n}s)=f_{\sigma}(g_{n,n}s) =f(g_{n,n})$.

Combining the above four cases shows that $f(g_{n,n}s) \leq f(g_{n,n})$ for all $s \in S_{d,q}$ and hence $g_{n,n}$ is a
dead end element in $\GG$ with respect to this generating set.

Thus, there is a geodesic path of length $n$ from $g_n$ which terminates at a dead end element of $\GG$, and no shorter path
from $g_n$ reaches a dead end element.  Hence each $g_n$ lies in a distinct cone type, and the theorem follows. \qed

\bibliographystyle{plain}
\bibliography{refs}

\end{document}